\newtheorem{alg}{Algorithm}
\newtheorem{example}{Example}
\newtheorem{remark}{Remark}
\newtheorem{proposition}{Proposition}
\newcommand{\ealg}{\end{alg}}
\newcommand{\balg}{\begin{alg}}
\newcommand{\bG}{\mathbf{G}}
\newcommand{\T}{{\top}}
\newcommand{\bu}{\mathbf{u}}
\newcommand{\bU}{\mathbf{U}}
\newcommand{\ben}{\begin{enumerate}}
\newcommand{\een}{\end{enumerate}}
\newcommand{\beq}{\begin{equation}}
\newcommand{\eeq}{\end{equation}}
\newcommand{\ei}{\end{itemize}}
\newcommand{\bi}{\begin{itemize}}
\newcommand{\bex}{\begin{example}}
\newcommand{\eex}{\end{example}}
\newcommand{\berem}{\begin{remark}}
\newcommand{\erem}{\end{remark}}
\newcommand{\beprop}{\begin{proposition}}
\newcommand{\eprop}{\end{proposition}}
\newcommand{\Var}{\mathrm{Var}}
\newcommand{\var}{\Var}
\newcommand{\Cov}{\mathrm{Cov}}
\newcommand{\cov}{\Cov}
\renewcommand{\epsilon}{\varepsilon}
\renewcommand{\rho}{\varrho}
\renewcommand{\log}{\ln}
\renewcommand{\hat}{\widehat}
\renewcommand{\leq}{\leqslant}
\renewcommand{\geq}{\geqslant}
\newcommand{\Em}{\mathbb E}
\newcommand{\Pm}{\mathbb P}
\newcommand{\Z}{\mathbb Z}
\newcommand{\gvn}{\,|\,}
\newcommand{\ds}{\displaystyle}
\newcommand{\vect}[1]{\boldsymbol #1}
\newcommand{\di}{\mathrm{d}}
\newcommand{\ra}{\rightarrow}
\def\acro#1#2{\vskip4pt\hbox to\textwidth{\normalsize
\hbox to5pc{#1\hfill}\vtop{\advance\hsize by
-5pc\raggedright\noindent#2}}}
\def\symbol#1#2{\vskip4pt\hbox to\textwidth{\normalsize
\hbox to5pc{#1\hfill}\vtop{\advance\hsize by
-5pc\raggedright\noindent#2}}}
\def\distro#1#2{\vskip4pt\hbox to\textwidth{\normalsize
\hbox to5pc{#1\hfill}\vtop{\advance\hsize by
-5pc\raggedright\noindent#2}}}
\newcommand{\diag}{\mathrm{diag}}
\newcommand{\chk}[1]{}
\numberwithin{equation}{section}
\theoremstyle{plain}
\newtheorem{thm}{Theorem}[section]
\newtheorem{lemma}{Lemma}[section]
\begin{document}

\begin{frontmatter}
\title{Computing the  Drift of Random Walks in \\ Dependent Random Environments}
\runtitle{Computing the  drift of RWREs}


\begin{aug}
\author{\fnms{Werner R.W.}
  \snm{Scheinhardt}\ead[label=e1]{w.r.w.scheinhardt@utwente.nl}}
\and
\author{\fnms{Dirk P.} \snm{Kroese}
\ead[label=e2]{kroese@maths.uq.edu.au}
\ead[label=u1,url]{http://wwwhome.math.utwente.nl/~scheinhardtwrw/}
\ead[label=u2,url]{http://www.maths.uq.edu.au/~kroese/}
}

\runauthor{W.R.W. Scheinhardt and D.P. Kroese}

\affiliation{University of Twente and The University of Queensland}

\address{Department of Applied Mathematics\\
University of Twente\\
P.O. Box 217 \\
7500 AE  Enschede\\
The Netherlands\\
\printead{e1}\\
\printead{u1}
}

\address{School of Mathematics and Physics\\
The University of Queensland\\
Brisbane 4072\\ Australia \\
\printead{e2}\\
\printead{u2}}
\end{aug}

\begin{abstract}

Although the theoretical 
behavior of one-dimensional random walks in random
environments is well understood,   
the numerical evaluation of various characteristics of such processes
has received relatively little attention. 
This paper develops new theory and methodology for the computation of
the drift of the random walk for various dependent random
environments, including $k$-dependent and moving average environments.

\end{abstract}

\begin{keyword}[class=MSC]
\kwd[Primary ]{60K37}
\kwd{60G50}
\kwd[; secondary ]{82B41}
\end{keyword}

\begin{keyword}
\kwd{random walk}
\kwd{dependent random environment}
\kwd{drift}
\kwd{Perron--Frobenius eigenvalue}
\end{keyword}

\end{frontmatter}

 \section{Introduction} \label{sec:intro}

Random walks in random environments (RWREs) are well-known mathematical
models for motion 
through disorganized (random) media. They generalize
 ordinary random walks, usually on the $d$-dimensional lattice $\Z^d$, 
 via a two-stage random procedure.  First, the environment 
is generated according to some probability
 distribution (e.g., on a set ${\cal U}^{\, \Z}$, where ${\cal U}$ is the set
 of all possible environment states at  any position). Second, the
 walker performs an ordinary  random walk $\{X_n, n = 0,1, \ldots\}$ in which the transition
 probabilities at any state are
 determined by the environment at that state. RWREs exhibit
 interesting and unusual behavior that is not seen in ordinary random
 walks. For example, the walk can tend to infinity almost surely,  
  while the  
 speed (also called drift) is 0; that is, $\Pm(\lim_{n \ra \infty} X_n
 = \infty) = 1$, while $\lim_{n \ra \infty} X_n/n = 0$. The reason for
 such surprising behavior  is that RWREs can spend a long time in
 (rare) regions from which it is difficult to escape --- in effect, the
 walker becomes ``trapped'' for a long time.

Since the late 1960s a vast body of knowledge has been built up on the
behavior of RWREs. 
Early applications can be found in 
Chernov \cite{chernov67} and Temkin \cite{temkin69}; see also
Kozlov \cite{kozlov85} and references therein.
Recent applications to charge transport in designed materials 
are given in 
Brereton et al.\ \cite{brereton12} and 
Stenzel et al.\ \cite{stenzel14}. 
The mathematical framework for RWREs was laid by 
Solomon \cite{solomon75}, who proved conditions for 
recurrence/transience for one-dimensional RWREs and also derived law
of large number properties for such processes.   
Kesten et al.\ \cite{kesten75}  were the first to establish 
central limit-type 
scaling laws for transient RWREs,  and   
Sinai \cite{sinai83} proved such results for the recurrent case,
showing remarkable ``sub-diffusive'' behavior. 
Large deviations for these processes were obtained in Greven and Den Hollander 
\cite{greven94}. The main focus in these papers was on one-dimensional
random walks in independent
environments. 
Markovian environments were investigated in Dolgopyat
\cite{dolgopyat2008} and 
Mayer-Wolf et al. \cite{wolf04}.
Alili \cite{alili99} showed that in the one-dimensional case much of the
theory for independent environments 
could be 
generalized to the case where the environment process is stationary and ergodic. 
Overviews of the current state of the art, with a  focus on
higher-dimensional RWREs, can be found, for example, in 
Hughes \cite{hughes2}, Sznitman \cite{sznitman04}, 
Zeitouni \cite{zeit04,zeit2012}, and R\'ev\'esz \cite{revesz}.  

Although the theoretical behavior of one-dimensional RWREs is nowadays
well understood (in terms of transience/recurrence, law of large numbers, central limits, and 
large deviations), it remains difficult to find easy to
compute expressions  for key measures such as the drift of the
process. To the best of our knowledge such expressions are only
available in simple one-dimensional cases with independent random
environments. The purpose of this paper is to develop theory and
methodology for the computation of the drift of the random walk for
various dependent environments, including one where the environment
is obtained as a moving average of independent environments.

The rest of the paper is organized as follows. In
Section~\ref{sec:model} we formulate the model for a one-dimensional
RWRE in a stationary and ergodic environment and review
some  of the key results from \cite{alili99}. We then formulate
special cases for the environment: the iid, the Markovian, the
$k$-dependent, and the moving average environment.
In Section~\ref{sec:eval} we derive explicit (computable) 
expressions for the drift for each of these models, using a novel
construction involving an auxiliary Markov chain. Conclusions and
directions for future research are given in Section~\ref{sec:concl}.


\section{Model and preliminaries}\label{sec:model}

In this section we review some key results on one-dimensional 
RWREs and introduce the class of ``swap-models''
that we will study in more detail. We mostly follow the notation of Alili \cite{alili99}.

\subsection{General theory} 

Consider a stochastic
process $\{X_n,n=0,1,2,\ldots\}$ with state space $\Z$, and a stochastic 
``Underlying'' environment $\bU$ taking values in some set ${\cal
  U}^{\,\Z}$, where ${\cal U}$ is the set of possible environment states
for each site in $\Z$. We assume that $\bU$ is stationary (under $\Pm$)
as well as  ergodic (under the natural shift operator on $\Z$). 
The evolution of $\{X_n\}$ depends on the realization of $\bU$, which is random
but fixed over time. For any realization $\bu$ of $\bU$ the process $\{X_n\}$ behaves as a simple random walk with transition probabilities 
\begin{equation} 
\begin{split} \label{transitions}
\Pm(X_{n+1} &= i+1 \gvn X_n = i, \bU = \bu) = \alpha_i(\bu)\\
\Pm(X_{n+1} &= i-1 \gvn X_n = i, \bU = \bu) =
\beta_i(\bu)=1-\alpha_i(\bu).
\end{split}
\end{equation}

The general behavior of $\{X_n\}$ is well
understood. Theorems~\ref{thm:alili1} and \ref{thm:alili2} below 
completely describe the transience/recurrence behavior and the Law of
Large Numbers behavior of $\{X_n\}$. The key quantities in these
theorems are given first.
Define  
\begin{equation} \label{rhodef}
\sigma_i = \sigma_i(\bu)=\frac{\beta_i(\bu)}{\alpha_i(\bu)}, \quad i \in \Z\;,
\end{equation} 
and let
\begin{equation} \label{Sdef}
S = 1 + \sigma_1 + \sigma_1 \,\sigma_2 + \sigma_1 \,\sigma_2 \,\sigma_3 + \cdots
\end{equation}
and 
\begin{equation}\label{Fdef}
F = 1 + \frac{1}{\sigma_{-1}} + \frac{1}{\sigma_{-1}\,\sigma_{-2}} +
\frac{1}{\sigma_{-1}\,\sigma_{-2}\, \sigma_{-3}} + \cdots 
\end{equation}

\begin{thm}~(Theorem 2.1 in  \cite{alili99}) \label{thm:alili1}
\begin{enumerate}
\item If $\Em [\log \sigma_0]< 0$, then almost surely $\ds \lim_{n\ra \infty} {X_n} = \infty\;.$
\item If $\Em [\log \sigma_0] > 0$, then almost surely $\ds \lim_{n\ra \infty} {X_n} =-\infty\;.$
\item If $\Em [\log \sigma_0] = 0$, then almost surely $\ds \liminf_{n\ra \infty} {X_n} =-\infty$ and $\ds \limsup_{n\ra \infty} {X_n} =\infty\;.$
\end{enumerate}
\end{thm}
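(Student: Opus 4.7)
The plan is to work conditionally on the environment (the quenched approach), reducing the problem to the classical theory of a nearest-neighbor birth--death chain on $\Z$, and then apply Birkhoff's ergodic theorem to the log-ratios $\log \sigma_i$. Fix a realization $\bu$ of $\bU$ and set $V_0 = 1$, $V_k = \sigma_1 \sigma_2 \cdots \sigma_k$ for $k \geq 1$, and $V_{-k} = 1/(\sigma_{-1}\sigma_{-2}\cdots\sigma_{-k})$ for $k \geq 1$, so that $S = \sum_{k\geq 0} V_k$ and $F = \sum_{k < 0} V_k$. A standard harmonic-function computation (verifying that $h(j) = \sum_{k=0}^{j-1} V_k$ is harmonic for the chain) together with optional stopping yields, for $M,N>0$,
\begin{equation*}
\Pm(X_n \text{ hits } N \text{ before } -M \mid X_0=0,\,\bU=\bu) \;=\; \frac{\sum_{k=-M}^{-1}V_k}{\sum_{k=-M}^{N-1}V_k}.
\end{equation*}

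Sending $M, N \to \infty$ reads off the quenched dichotomy: if $S(\bu)<\infty$ and $F(\bu)=\infty$ the chain escapes to $+\infty$ almost surely; if $S(\bu)=\infty$ and $F(\bu)<\infty$ it escapes to $-\infty$; and if $S(\bu)=F(\bu)=\infty$ the chain is recurrent on $\Z$, so $\liminf X_n = -\infty$ and $\limsup X_n = +\infty$. (The remaining case $S, F < \infty$ will be ruled out automatically below.) Up to this point nothing has used stationarity of the environment --- only the birth--death structure.

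Next, I would apply Birkhoff's ergodic theorem to the stationary ergodic sequence $\{\log \sigma_i\}_{i \in \Z}$ to obtain
\begin{equation*}
\frac{1}{n}\log V_n \;=\; \frac{1}{n}\sum_{i=1}^{n}\log\sigma_i \;\longrightarrow\; \Em[\log\sigma_0]\quad\text{a.s.,}
\end{equation*}
with the analogous statement for the negative indices. When $\Em[\log\sigma_0]<0$, the terms $V_k$ decay geometrically in $k\geq 0$, so $S<\infty$ a.s., while symmetrically the terms $V_{-k}$ grow geometrically, so $F=\infty$ a.s.; case 1 now follows from the dichotomy above. Case 2 is identical by symmetry (swap the roles of $S$ and $F$).

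The main obstacle is case 3, $\Em[\log\sigma_0]=0$, where the ergodic theorem only gives sublinear growth of $\log V_n$ and does not by itself preclude fast enough decay of $V_k$ to make $S$ finite. The key additional ingredient I would invoke is the recurrence of the partial-sum process: for a stationary ergodic sequence $\{\log\sigma_i\}$ with zero mean, either $\log\sigma_0\equiv 0$ a.s. (a trivial sub-case in which $V_k\equiv 1$ and both $S$ and $F$ are plainly infinite), or else $\limsup_{n\to\infty}\log V_n = +\infty$ and $\liminf_{n\to\infty}\log V_n = -\infty$ a.s. --- a consequence of the Atkinson--Kesten type recurrence theorem for ergodic cocycles with integrable zero-mean increments. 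The $\limsup$ statement alone forces infinitely many $V_k$ to exceed $1$, hence $S=\infty$ a.s.; the symmetric statement for the backward direction gives $F=\infty$ a.s. Combined with the quenched dichotomy from the first step this yields the required recurrent oscillation, completing case 3. The delicate technical step that warrants the most care is precisely this recurrence of mean-zero ergodic sums --- it is the place where mere ergodicity (as opposed to independence) needs to be used non-trivially.
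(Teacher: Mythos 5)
The paper does not prove this statement at all --- it is imported verbatim as Theorem 2.1 of Alili \cite{alili99} --- so there is no internal proof to compare against; what you have written is essentially the standard Solomon--Alili argument, and it is correct in outline. The quenched gambler's-ruin computation, the dichotomy in terms of the convergence of $S$ and $F$, and the application of Birkhoff's theorem in cases 1 and 2 are all sound (modulo a harmless off-by-one in your indexing of $V_{-k}$, and the implicit integrability of $\log\sigma_0$ needed for Birkhoff, which is automatic for the swap models of this paper since $\log\sigma_0$ is bounded). One claim in case 3 is overstated, though: it is \emph{not} true that a stationary ergodic, integrable, mean-zero sequence that is not identically zero must have $\limsup_n \log V_n=+\infty$ and $\liminf_n \log V_n=-\infty$; if $\log\sigma_i$ is a coboundary $g\circ\theta^i-g\circ\theta^{i+1}$ with $g$ bounded, the partial sums stay bounded. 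What Atkinson's recurrence theorem actually gives is $\liminf_n|\log V_n|=0$ a.s., hence $\limsup_n \log V_n\geq 0$, and this weaker conclusion already suffices for your purposes: infinitely many $V_k$ are bounded below by a positive constant, so $S=\infty$ (and symmetrically $F=\infty$), and the quenched dichotomy then yields the oscillation in case 3. With that correction the argument goes through; you should also add a sentence in cases 1--2 explaining why positive probability of never hitting $-M$, together with the trichotomy for nearest-neighbour chains on $\Z$, upgrades to $X_n\to+\infty$ almost surely.
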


\begin{thm}~(Theorem 4.1 in  \cite{alili99})  \label{thm:alili2}
\begin{enumerate}
\item If $\Em[ S] < \infty$, then almost surely $\ds \lim_{n\ra \infty} \frac{X_n}{n} =
  \frac{1}{\Em[(1+\sigma_0)S]} = \frac{1}{2\Em[S]-1}\;.$
\item If $\Em[ F] < \infty$, then almost surely $\ds \lim_{n\ra \infty} \frac{X_n}{n} =
   \frac{-1}{\Em[(1+\sigma_0^{-1})F]} = \frac{-1}{2\Em[F]-1}\;.$
\item If $\Em[ S] = \infty$ and $\Em[ F] = \infty$, then almost surely 
 $\ds \lim_{n\ra \infty} \frac{X_n}{n} = 0$.
\end{enumerate}
\end{thm}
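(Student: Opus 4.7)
The plan is to adapt Solomon's classical hitting-time argument to the stationary ergodic setting. We may assume $X_n\to+\infty$ almost surely: this follows from Theorem~\ref{thm:alili1}(i), since $\Em[S]<\infty$ forces $\Em[\sigma_1\cdots\sigma_k]\to 0$ and Jensen's inequality then gives $\exp(k\,\Em[\log\sigma_0])\leq\Em[\sigma_1\cdots\sigma_k]\to 0$, hence $\Em[\log\sigma_0]<0$. Consequently the hitting times $T_n:=\inf\{k\geq 0:X_k=n\}$ are all a.s.\ finite, and admit the telescoping decomposition
\begin{equation*}
T_n \;=\; \sum_{i=0}^{n-1}\tau_i, \qquad \tau_i := T_{i+1}-T_i.
\end{equation*}

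The core quenched computation is the conditional expected value of $\tau_i$ given the environment. First-step analysis combined with the strong Markov property in a frozen environment $\bu$ gives the recursion $\alpha_i(\bu)\,\Em[\tau_i\gvn\bu] = 1 + \beta_i(\bu)\,\Em[\tau_{i-1}\gvn\bu]$, which iterates to
\begin{equation*}
\Em[\tau_0\gvn\bu] \;=\; \sum_{k\geq 0}\frac{1}{\alpha_{-k}(\bu)}\prod_{j=0}^{k-1}\sigma_{-j}(\bu) \;=\; 1 + 2\sum_{k\geq 1}\sigma_0(\bu)\sigma_{-1}(\bu)\cdots\sigma_{-k+1}(\bu),
\end{equation*}
where the second equality uses $1/\alpha_j=1+\sigma_j$. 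Averaging over $\bU$ and using the stationarity of the environment to replace each backward product $\sigma_0\sigma_{-1}\cdots\sigma_{-k+1}$ by the equally distributed forward product $\sigma_1\sigma_2\cdots\sigma_k$, I obtain
\begin{equation*}
\Em[\tau_0] \;=\; 1 + 2\sum_{k\geq 1}\Em[\sigma_1\sigma_2\cdots\sigma_k] \;=\; 2\Em[S]-1,
\end{equation*}
and the equivalent form $\Em[(1+\sigma_0)S]$ follows from $\Em[\sigma_0 S]=\Em[S]-1$ by the same shift.

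To finish, I would observe that under the joint annealed law the sequence $(\tau_i)_{i\geq 0}$ is a stationary ergodic functional of the shift-invariant environment together with fresh walk-randomness, so Birkhoff's ergodic theorem yields $T_n/n\cas 2\Em[S]-1$. Inverting $X_{T_n}=n$ gives the drift along the subsequence $(T_n)$; to pass to arbitrary $n$, set $M_n:=\max\{k:T_k\leq n\}$, so that $T_{M_n}\leq n<T_{M_n+1}$ and squeezing yields $M_n/n\to 1/(2\Em[S]-1)$, while the excursion bound $M_n-X_n\leq T_{M_n+1}-T_{M_n}=o(M_n)$ forces $X_n/n\to M_n/n$. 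Part (ii) is (i) applied to the reflected walk $\{-X_n\}$ in the reversed environment, and part (iii) follows by truncating the series $S$ and $F$ to obtain lower bounds on $T_n/n$ that diverge, forcing $X_n/n\to 0$. The main obstacle I anticipate is verifying that the shift-ergodicity of $\bU$ transfers to the annealed sequence $(\tau_i)$, since $\tau_i$ involves walk-randomness on top of the environment; this is handled by working on the joint skew-product space on which the shift still acts ergodically.
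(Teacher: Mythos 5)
Your proposal reconstructs the theorem from first principles, whereas the paper does not prove the main statements at all: it imports them from Theorem 4.1 of \cite{alili99} and only supplies a proof of the \emph{added} second equalities, namely the shift computation showing $\Em[\sigma_0\sigma_1\cdots\sigma_{n-1}]=\Em[\sigma_1\cdots\sigma_n]$ and hence $\Em[(1+\sigma_0)S]=2\Em[S]-1$. So the two routes are genuinely different in scope. Your hitting-time argument is the standard Solomon--Alili one and is essentially sound: the deduction $\Em[S]<\infty\Rightarrow\Em[\log\sigma_0]<0$ via Jensen is correct, the quenched recursion $\alpha_i\Em[\tau_i\gvn\bu]=1+\beta_i\Em[\tau_{i-1}\gvn\bu]$ and its iterate are right, and your identity $\Em[\tau_0]=2\Em[S]-1$ recovers, as a byproduct, exactly the stationarity computation that constitutes the paper's entire contribution to this theorem (your step $\Em[\sigma_0 S]=\Em[S]-1$ is the same shift argument). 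What your route buys is a self-contained proof and a probabilistic interpretation of $2\Em[S]-1$ as the mean annealed crossing time; what the paper's route buys is brevity, delegating the hard probabilistic work to \cite{alili99}.

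Two places in your sketch carry real weight and would need the care that Alili (or Zeitouni's lecture notes) devote to them. First, the annealed stationarity and ergodicity of $(\tau_i)$ is the crux, not a formality: $\tau_i$ depends on the environment to the left of $i+1$ \emph{and} on walk randomness shared across levels, so one must exhibit the sequence as a factor of an ergodic skew product, as you correctly anticipate; relatedly, iterating the recursion to an infinite series needs a monotone-convergence or truncation justification since finiteness of $\Em[\tau_{i-1}\gvn\bu]$ is not known a priori. Second, part (iii) is compressed: in the transient subcase the truncation $\Em[\tau_0\wedge C]\uparrow\infty$ plus Birkhoff does give $T_n/n\to\infty$ and hence $X_n/n\to 0$, but in the recurrent subcase ($\Em[\log\sigma_0]=0$) you must run the argument in both directions, bounding $X_n$ between $\min_{k\le n}X_k$ and $\max_{k\le n}X_k$ and using the divergence of both families of hitting times; your one-line reference to ``truncating $S$ and $F$'' gestures at this but does not carry it out. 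Neither point is a wrong turn, but both are where a referee of a self-contained write-up would push.
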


Note that we have added the second equalities in statements 1.\ and
2.\ of Theorem~\ref{thm:alili2}. These follow directly from the stationarity of
$\bU$. In particular, if $\theta$ denotes the shift operator on $\Z$,
then
\begin{align*}
\Em[\sigma_0\sigma_1 \cdots \sigma_{n-1}] 
&= \Em\left[\frac{\beta_0(\bU)\beta_1(\bU)\cdots\beta_{n-1}(\bU)}{\alpha_0(\bU)\alpha_1(\bU)\cdots\alpha_{n-1}(\bU)} \right]\\
&= \Em\left[\frac{\beta_1(\theta\bU)\beta_2(\theta\bU)\cdots\beta_{n}(\theta\bU)}{\alpha_1(\theta\bU)\alpha_2(\theta\bU)\cdots\alpha_{n}(\theta\bU)} \right]\\
(\mbox{apply } \theta\bU \stackrel{d}{=}\bU)\ &= \Em\left[\frac{\beta_1(\bU)\beta_2(\bU)\cdots\beta_{n}(\bU)}{\alpha_1(\bU)\alpha_2(\bU)\cdots\alpha_{n}(\bU)} \right]\\
&=\Em[\sigma_1\sigma_2 \cdots \sigma_{n}],
\end{align*}
from which it follows  that $\Em[(1+\sigma_0)S] = 2\Em[S]-1$. 

We will call $\lim_{n\ra \infty} X_n/n$ the {\em drift} of the
process $\{X_n\}$, and denote it by $V$.
Note that, as mentioned in the  introduction,  it is possible for the chain to be transient with drift 0 (namely when  $\Em [\log \sigma_0] \neq 0$,  $\Em[ S] = \infty$ and $\Em[ F] = \infty$).

\subsection{Swap model}\label{ssec:swapmodel}
We next focus on what we will call {\em swap} models. 
Here,  ${\cal U}=\{-1,1\}$; that is, we
assume that all elements $U_i$ of the process $\bU$ take value either
$-1$ or $+1$. We assume that the transition probabilities in
state $i$ only depends on $U_i$, and not on other elements of $\bU$, as
follows. When $U_i=-1$, the transition probabilities of $\{X_n\}$ from
state $i$ to states $i+1 $ and $i-1$ are swapped with respect to the
values they have when $U_i=+1$. Thus, for some fixed value $p$ in
$(0,1)$ we let $\alpha_i(\bu)=p$ (and $\beta_i(\bu)=1-p$) if $u_i =
1$, and $\alpha_i(\bu)=1-p$ (and $\beta_i(\bu)=p$) if $u_i =
-1$. Thus, (\ref{transitions}) becomes
\[
\Pm(X_{n+1} = i+1 \gvn X_n = i, \bU = \bu) = \begin{cases}
p & \text{ if } u_i = 1 \\
1-p & \text{ if } u_i = -1
\end{cases}
\]
and 
\[
\Pm(X_{n+1} = i-1 \gvn X_n = i, \bU = \bu) = \begin{cases}
1-p & \text{ if } u_i = 1 \\
p & \text{ if } u_i = -1\;.
\end{cases}
\]
Notice that due to our convenient choice of notation for the states in
${\cal U}=\{-1,1\}$ we have 
\[
\sigma_i = \frac{p}{1-p} \mathbb{I}(U_i=-1)+\frac{1-p}p \mathbb{I}(U_i=1) =  \sigma^{U_i},
\]
where $\sigma =(1-p)/p$.  Also, for the quantities in Theorems~\ref{thm:alili1} and~\ref{thm:alili2} we find the following.
\begin{equation} \label{logrho-swap}
\Em[\log \sigma_0]=\Em[U_0 \log\sigma]=\log\sigma\ \Em[U_0],
\end{equation}
the sign of which (and hence the a.s.\ limit of $X_n$) only depends on
whether $p$ is less than or greater than 1/2, and on whether $\Em[U_0]$ is positive or negative, regardless of the dependence structure between the $\{U_i\}$. Furthermore,
\begin{equation} \label{ES-swap}
\Em[S]= 
\sum_{n=0}^\infty \Em\left[\sigma^{\sum_{i=1}^n U_{i}}\right]\quad \mbox{ and } \quad 
\Em[F]= \sum_{n=0}^\infty \Em\left[\sigma^{-\sum_{i=1}^n U_{-i}}\right]\;.
\end{equation}
In what follows we will focus on $\Em[S]$, since analogous results for
$\Em[F]$ follow by replacing $\sigma$ with $\sigma^{-1}$ and $p$ with
$1-p$. This follows from the stationarity of $\bU$, which implies that for any $n$ the product $\sigma_{-1}\sigma_{-2}\cdots \sigma_{-n}$ has the same distribution as $\sigma_1\sigma_2\cdots \sigma_n$ (apply a shift over $n+1$ positions).

Next,  we need to choose a dependence structure for $\bU$. 
The standard case, first studied  by 
Sinai \cite{sinai83}, simply assumes that the $\{U_i\}$ are iid
  (independent and identically distributed):

\medskip
\noindent{\bf Iid environment.}
Let the $\{U_i\}$ be iid with 
\[
\Pm(U_{i}=1)=\alpha, \qquad \Pm(U_{i}=-1)=1-\alpha
\]
for some $0 < \alpha < 1$. In this case the model has two parameters: $\alpha$ and $p$.

\medskip
\noindent
We  extend this to a more general model where $\bU$ is
generated by a stationary and ergodic Markov chain $\{Y_i, i \in
\Z\}$ taking values in a finite set $\{1,\ldots,m\}$. In particular, we let
$U_i = g(Y_i)$, where $g:\{0,\ldots,m\} \ra \{-1,1\}$ is a given
  function. Despite its simplicity, this formalism covers a number of
  interesting dependence structures on $\bU$, discussed next.

\medskip
\noindent{\bf Markov environment.} Let $U_i = Y_i$, where $\{Y_i\}$ is
a stationary discrete-time Markov chain on $\{-1,1\}$, with one-step
transition matrix $P$ given by
\[
P=\left[
\begin{array}{cc}
1-a&a\\
b&1-b
\end{array}
\right],
\]
for some $a,b \in (0,1)$. The $\{U_i\}$ form a dependent Markovian
environment depending on $a$ and $b$.

\medskip

\noindent{\bf $k$-dependent environment.} \label{sec:nstep}
Let $k \geq 1$ be a fixed integer. Our goal is to obtain a generalization of the Markovian environment in which 
the conditional distribution of $U_i$ given all other variables
is the same as the conditional distribution of $U_i$ given only 
$U_{i-k}, \dots, U_{i-1}$ (or, equivalently, given 
$U_{i+1},\ldots,U_{i+k}$).
To this end we define a $k$-dimensional Markov chain $\{Y_i,i\in \Z\}$ on $\{-1,1\}^k$ as follows.
 From any state $(u_{i-k},  \ldots, u_{i-1})$ in $\{-1,1\}^k$,  $\{Y_i\}$ has two possible one-step transitions,  given by
\[
(u_{i-k},  \ldots, u_{i-1})
\rightarrow
({u}_{i-k+1},\ldots, u_{i-1}, u_i), \quad u_j \in \{-1,1\}, 
\]
with corresponding probabilities $1-a_{({u}_{i-k},\ldots,{u}_{i-2})}$, $a_{({u}_{i-k},\ldots,{u}_{i-2})}$, $b_{({u}_{i-k},\ldots,{u}_{i-2})}$, and $1-b_{({u}_{i-k},\ldots,{u}_{i-2})}$, 
for $(u_{i-1},u_i)$ equal to $(-1,-1),$  $(-1,1),$ $(1,-1)$, and $(1,1)$,
respectively.  Now let  $U_i$ denote the last component of $Y_i$. 
Then $\{U_i, i \in \Z\}$ is a $k$-dependent environment, and  $Y_i=(U_{i-k+1},  \ldots, U_{i})$.

Note the correspondence in notation with the (1-dependent) Markov environment: $a$ indicates transition probabilities from $U_{i-1}=-1$ to $U_{i}=+1$, and $b$ from $U_{i-1}=+1$ to $U_{i}=-1$, where in both cases the subindex denotes the dependence on $U_{i-k}, \ldots, U_{i-2}$.
%

\medskip
\noindent {\bf Moving average environment.}  \label{sec:MAS}
Consider a ``moving average'' environment, which is built up in two phases as follows. First, start with an
iid environment $\{\hat U_i\}$ as in the iid case, with $\Pm(\hat U_{i}=1)=\alpha$. Let $Y_i =
(\hat U_i,\hat U_{i+1},\hat U_{i+2})$. Hence, $\{Y_i\}$ is a Markov process 
with states $1= (-1,-1,-1), 2 = (-1,-1,1), \ldots, 8 = (1,1,1)$ 
(lexicographical order). The corresponding transition matrix clearly is given by
\begin{equation}   \label{P_movav}
P  = \begin{bmatrix}
1-\alpha & \alpha & 0 & 0 & 0 & 0 & 0 & 0\\
0 & 0 & 1-\alpha & \alpha & 0 & 0 & 0 & 0 \\
0 & 0 & 0 & 0 & 1-\alpha & \alpha & 0 & 0 \\
0 & 0 & 0 & 0 & 0& 0& 1-\alpha & \alpha \\
1-\alpha & \alpha & 0 & 0 & 0 & 0 & 0 & 0\\
0 & 0 & 1-\alpha & \alpha & 0 & 0 & 0 & 0 \\
0 & 0 & 0 & 0 & 1-\alpha & \alpha & 0 & 0 \\
0 & 0 & 0 & 0 & 0& 0& 1-\alpha & \alpha \\
\end{bmatrix}.
\end{equation}
Now define $U_i = g(Y_i)$, where $g(Y_i)=1$ if at least two of the three random variables $\hat U_i,\hat U_{i+1}$ and 
$\hat U_{i+2}$ are 1, and $g(Y_i)=-1$ otherwise. Thus,
\begin{equation}   \label{g_movav}
(g(1), \ldots, g(8)) = (-1,-1,-1,1,-1,1,1,1)\;,
\end{equation}
and we see that each $U_i$ is obtained by taking the moving average of 
$\hat{U}_i, \hat{U}_{i+1}$ and $\hat{U}_{i+2}$, as illustrated in 
Figure~\ref{fig:movav}.

\begin{figure}[H]
\centering
 \includegraphics[width=0.7\linewidth]{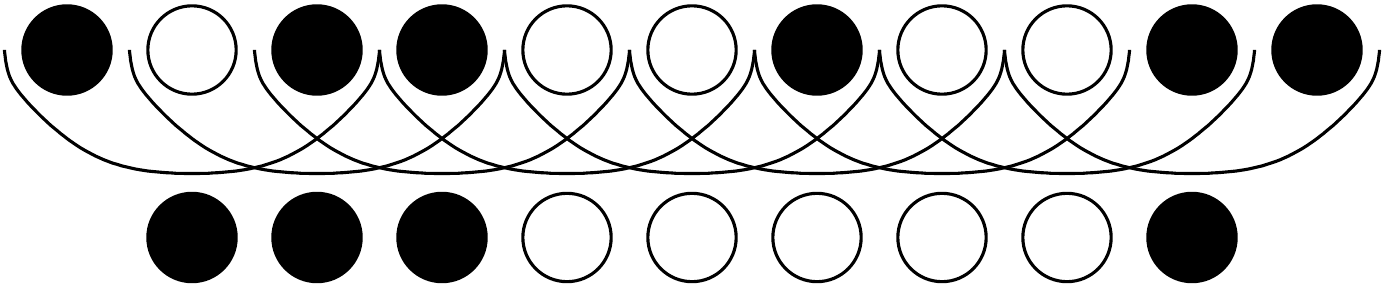}
\label{fig:movav}
\caption{Moving average environment.}
\end{figure}

\section{Evaluating the drift}\label{sec:eval}

As a starting point for the analysis, we begin in Section~\ref{ssec:iid} with the solution for the iid
environment, based on first principles. As mentioned earlier, 
this case was first studied  by 
Sinai \cite{sinai83}.
Then, in 
Section~\ref{sec:generalsolution} we give the general solution approach for the Markov-based swap model, followed by sections with results on the transience/recurrence and on the drift for the random environments mentioned in Section~\ref{ssec:swapmodel}: the Markov environment, the 2-dependent environment, and the moving average environment (all based on Section~\ref{sec:generalsolution}).

\subsection{Iid environment}\label{ssec:iid}
As a warm-up we consider the iid case first, with $\Pm(U_{i}=1)=\alpha=1-\Pm(U_{i}=-1)$.
Here,  
\[
\Em [\log \sigma_0]=\Em [U_0] \log \sigma = (1-2\alpha) \log\frac{1-p}{p}.
\]
Hence, by Theorem~\ref{thm:alili1}, we have the following findings, consistent with intuition.
$X_n\rightarrow +\infty$ a.s.\ if and only if either $\alpha>1/2$ and $p > 1/2$, or $\alpha<1/2$ and $p<1/2$; 
$X_n\rightarrow -\infty$ a.s.\  if and only if either $\alpha>1/2$ and $p < 1/2$, or $\alpha<1/2$ and $p>1/2$; and 
$\{X_n\}$ is recurrent a.s.\ if and only if either $\alpha=1/2$, or $p = 1/2$, or both. 

Moving on to Theorem~\ref{thm:alili2}, we have
\begin{align}
\Em [S]&=\sum_{n=0}^\infty \Em\left[\sigma^{\sum_{i=1}^n U_{i}}\right] =
\sum_{n=0}^\infty \left(\Em[\sigma^{U_1}]\right)^n = \sum_{n=0}^\infty
(\sigma^{-1}(1-\alpha) + \sigma \alpha)^n, \label{iidS}
\end{align}
which is finite if and only if $\sigma^{-1}(1-\alpha) + \sigma \alpha<1$; that is,
$\Em [S]<\infty$ if and only if either $\alpha>1/2$ and $p \in (1/2, \alpha)$, or $\alpha<1/2$ and $p \in (\alpha, 1/2)$. 
Similarly (replace $\sigma$ by $\sigma^{-1}$ and $p$ by $1-p$), $\Em [F] = \sum_{n=0}^\infty
(\sigma(1-\alpha) + \sigma^{-1} \alpha)^n <\infty$ if and only if either $\alpha>1/2$ and $p \in (1-\alpha,1/2)$, or $\alpha<1/2$ and  $p \in (1/2, 1-\alpha)$.

Clearly the cases with respect to  $\Em[S]$  and $\Em[F]$ do not entirely cover the cases we concluded to be transient above. E.g., when  $\alpha > 1/2$ and $p \in [\alpha,1]$, the process tends to $+\infty$, but the drift is zero. We summarize our findings in the following theorem.

\begin{thm} \label{thm:iid}~
We distinguish between transient cases with and without drift, and the recurrent case as follows.
\begin{enumerate}
\item[1a.] If either $\alpha > 1/2$ and $p\in (1/2, \alpha)$ or $\alpha < 1/2$ and $p
  \in (\alpha, 1/2)$, then almost surely $\ds \lim_{n\ra \infty} {X_n} = \infty\;$ and
\begin{equation} \label{iid}
V = (2p-1)\frac{\alpha-p}{\alpha(1-p)+(1-\alpha)p} > 0\;.
\end{equation}
\item[1b.] If either 
 $\alpha > 1/2$ and $p \in (1-\alpha,1/2)$ or 
  $\alpha < 1/2$ and $p \in (1/2, 1-\alpha)$, then  almost surely $\ds \lim_{n\ra \infty} {X_n} = -\infty\;$ and 
 \begin{equation} \label{iid2}
V = - (1-2p)\frac{\alpha-(1-p)}{\alpha p+(1-\alpha)(1-p)} < 0\;.
\end{equation}
\item[2a.] If either $\alpha > 1/2$ and $p\in [\alpha,1]$ or $\alpha < 1/2$ and $p
  \in [0, \alpha]$, then almost surely $\ds \lim_{n\ra \infty} {X_n} = \infty\;,$ but $V=0$.
\item[2b.] If either $\alpha > 1/2$ and $p\in [0, 1-\alpha]$ or $\alpha < 1/2$ and $p
  \in [1-\alpha, 1]$, then almost surely $\ds \lim_{n\ra \infty} {X_n} = -\infty\;,$ but $V=0$.
\item[3.] Otherwise (when $\alpha = 1/2$ or $p=1/2$ or both),  $\{X_n\}$ is recurrent and $V = 0$. 
\end{enumerate}
\end{thm}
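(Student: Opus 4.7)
The plan is to apply Theorems~\ref{thm:alili1} and~\ref{thm:alili2} directly, after computing the two governing quantities $\Em[\log\sigma_0]$ and $\Em[S]$ in closed form. Independence makes both calculations immediate: by~(\ref{logrho-swap}) one has $\Em[\log\sigma_0] = (2\alpha-1)\log\frac{1-p}{p}$, and by~(\ref{iidS}) $\Em[S]$ is a plain geometric series with common ratio $r := \sigma^{-1}(1-\alpha)+\sigma\alpha$, so $\Em[S]<\infty$ iff $r<1$, in which case $\Em[S] = 1/(1-r)$. The corresponding statements for $\Em[F]$ follow from the symmetry $\sigma\leftrightarrow\sigma^{-1}$, equivalently $p\leftrightarrow 1-p$.

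For Case~1a I would first analyse the condition $r<1$. After multiplying through by $p(1-p)$, it reduces to $(2p-1)(p-\alpha)<0$, i.e.\ $p$ lies strictly between $1/2$ and $\alpha$, which is exactly the parameter range of~1a. On that region $\Em[\log\sigma_0]<0$, so Theorem~\ref{thm:alili1} gives $X_n\to\infty$ a.s. Substituting $\Em[S]=1/(1-r)$ into $V=1/(2\Em[S]-1)$ gives the compact form $V=(1-r)/(1+r)$. Multiplying numerator and denominator by $p(1-p)$ and expanding, the numerator becomes $-(2p-1)(p-\alpha)=(2p-1)(\alpha-p)$ and the denominator becomes $p(1-\alpha)+(1-p)\alpha$, reproducing~(\ref{iid}). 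Case~1b is obtained verbatim via $\Em[F]$ and yields~(\ref{iid2}).

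For Cases~2a/2b I would observe that these describe precisely the transient regime ($\Em[\log\sigma_0]\neq 0$) in which both $\Em[S]=\infty$ and $\Em[F]=\infty$. The analogous factorization for $F$ shows that $\Em[F]<\infty$ iff $p$ lies strictly between $1/2$ and $1-\alpha$, so both series diverge exactly off the Case~1 intervals. Intersecting with the sign of $\Em[\log\sigma_0]$ selects the four half-closed intervals appearing in 2a and 2b, and Theorem~\ref{thm:alili2}(3) then gives $V=0$. Case~3 is the boundary $\alpha=1/2$ or $p=1/2$: here $\Em[\log\sigma_0]=0$, so $\{X_n\}$ is recurrent by Theorem~\ref{thm:alili1}(3); moreover at $p=1/2$ one has $r=\alpha+(1-\alpha)=1$, and at $\alpha=1/2$ one has $r=(\sigma+\sigma^{-1})/2\geq 1$ by AM--GM, and likewise for~$F$, so $V=0$ by Theorem~\ref{thm:alili2}(3).

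The individual steps are routine algebra; the only real obstacle is the bookkeeping. One must check that the eight open intervals in 1a, 1b, 2a, 2b together with the boundary set $\{\alpha=1/2\}\cup\{p=1/2\}$ of Case~3 partition the parameter square $(0,1)^2$, and that the open versus closed endpoints match the strict versus non-strict nature of the inequalities $r<1$ and $r\leq 1$ that govern the geometric series for $\Em[S]$ and $\Em[F]$.
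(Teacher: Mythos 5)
Your argument is correct and follows essentially the same route as the paper: the paper's proof is literally ``immediate from the above,'' referring to precisely the computations you carry out --- the closed forms of $\Em[\log\sigma_0]$ and of the geometric series for $\Em[S]$ and $\Em[F]$, the reduction of $r<1$ to $(2p-1)(p-\alpha)<0$ (and its mirror for $F$), and the simplification of $V=(1-r)/(1+r)$ to \eqref{iid}. Incidentally, your sign $\Em[\log\sigma_0]=(2\alpha-1)\log\frac{1-p}{p}$ is the correct one; the paper's displayed factor $(1-2\alpha)$ is a typo, as its own transience conclusions confirm.
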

\begin{proof}
Immediate from the above; (\ref{iid}) follows from (\ref{iidS})
 by using $\sigma=(1-p)/p$;  and similar for \eqref{iid2}. 
\end{proof}
\medskip

We illustrate the drift as a function of $\alpha$ and $p$ in Figure~\ref{fig:iidcase}.

\begin{figure}[H]
\begin{center}
\includegraphics[width=0.7\linewidth,clip=]{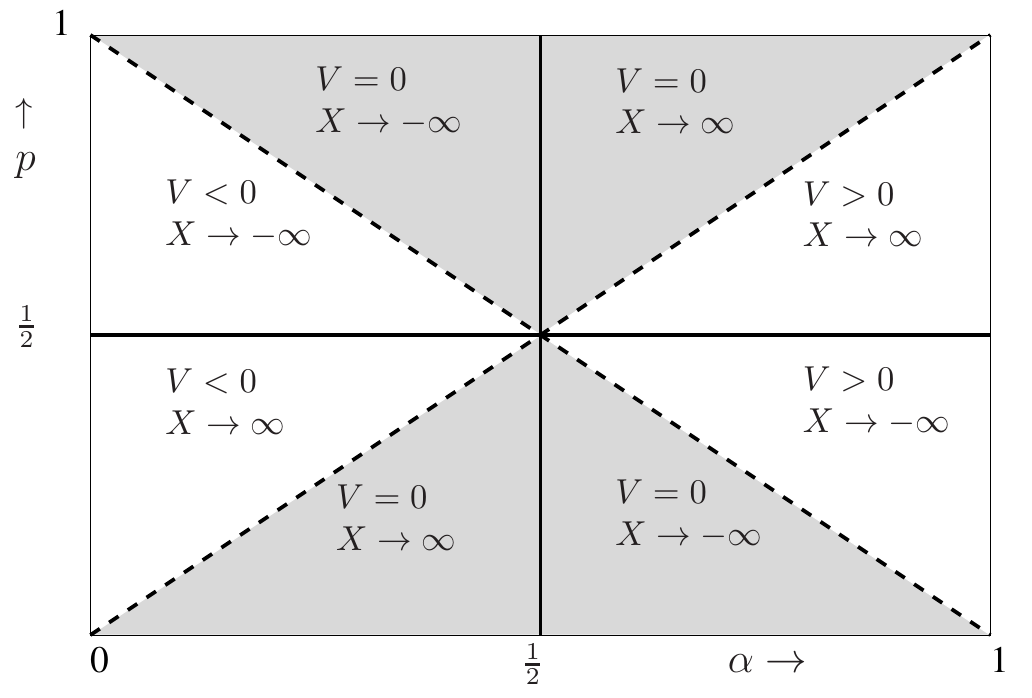}
\caption{Graphical representation of Theorem~\ref{thm:iid}. Solid
  lines, where the process is recurrent, divide the remaining
  parameter space in four quadrants. In quadrants I and III, $\{X_n\}$
  moves to the right; in quadrants II and IV, $\{X_n\}$ moves to the left. In gray areas (including dashed boundaries and boundaries at $p=0, 1$), the drift is zero. In white areas (including boundaries at $\alpha=0, 1$), the drift is nonzero.} \label{fig:iidcase}
\end{center}
\end{figure}

\subsection{General solution for swap models}  \label{sec:generalsolution}

Consider a RWRE swap model with a random environment generated by a Markov
chain $\{Y_i, i \in \Z\}$, as specified in
Section~\ref{sec:model}. We already saw that the a.s.\ limit of $X_n$ only depends on whether $p$ is less than or larger than 1/2, and on whether $\Em[U_0]$ is positive or negative, regardless of the dependence structure between the $\{U_i\}$, see (\ref{logrho-swap}). The other key quantity to evaluate is (see
\eqref{ES-swap}): 
\[
\Em[S] = 
\sum_{n=0}^\infty \Em\left[\sigma^{\sum_{i=1}^n U_{i}}\right] = 
\sum_{n=0}^\infty \Em\left[\sigma^{\sum_{i=1}^n g(Y_{i})}\right]\;.
\] 
 Let 
\[
G^{(n)}_{y}(\sigma) = \Em\left[ \sigma^{\sum_{i=1}^n g(Y_i)} \gvn Y_0 = y\right],
\quad y = 1,\ldots,m\;.
\]
Let $P = (P_{y, y'})$ be the one-step transition matrix of $\{Y_i\}$.
Then, by conditioning on $Y_1$,  
\[
\begin{split}
G^{(n+1)}_y(\sigma) & =  \Em\left[ \sigma^{\sum_{i=1}^{n+1} g(Y_i)} \gvn Y_0 =
  y\right] =  \Em\left[ \sigma^{\sum_{i=2}^{n+1} g(Y_i)}
  \sigma^{g(Y_1)} \gvn Y_0   = 
  y\right] \\
& = \sum_{y'=1}^m P_{y,y'} \sigma^{g(y')} G_{y'}^{(n)}(\sigma)\;.
\end{split}
\]
In matrix notation, with $\bG^{(n)}(\sigma) =(G_{1}^{(n)}(\sigma), \ldots,  G_{m}^{(n)}(\sigma))^\T$,
we can write this as 
\[
\bG^{(n+1)}(\sigma) = P D \bG^{(n)}(\sigma),
\]
where 
\[
D = \diag(\sigma^{g(1)}, \ldots,\sigma^{g(m)})\;.
\]
It follows, also using $G^{(0)}_y(\sigma)=1$, that
\[
\bG^{(n)}(\sigma)=(PD)^n \bG^{(0)}(\sigma)=(PD)^n\vect{1},
\]
where $\vect{1} = (1,1)^\T$, 
and hence
\begin{equation*}
\Em[S] =\sum_{n=0}^\infty \vect{\pi} \bG^{(n)}(\sigma)\ 
              =\ \vect{\pi}\sum_{n=0}^\infty (PD)^n \vect{1}, 
\end{equation*}
where $\vect{\pi}$ denotes the stationary distribution vector for $\{Y_i\}$. The matrix series $\sum_{n=0}^\infty (PD)^n$ converges if and only if $\mbox{Sp}(PD)<1$, where $\mbox{Sp}(\cdot)$ denotes the spectral radius, and in that  case the limit is $(I-PD)^{-1}$. Thus, we end up with 
\begin{equation}  \label{series}
\Em[S]=
 \begin{cases}
 \vect{\pi} (I-PD)^{-1}  \vect{1} & \text{ if  Sp}(PD)<1\\
\infty & \text{ else. }
\end{cases}
\end{equation}

Based on the above, the following subsections will give results on the
transience/recurrence and on the drift for the random environments
mentioned in Section~\ref{ssec:swapmodel}.

\subsection{Markov environment}     \label{sec:markovdependence}
The quantity $\Em[\log \sigma_0]$ in Theorem~\ref{thm:alili1}, which determines whether $X_n$ will diverge to $+\infty$ or $-\infty$, or is recurrent, is given by 
\[
\Em[\log \sigma_0]=\frac{b}{a+b}\log \sigma^{-1} + \frac{a}{a+b}\log\sigma =\frac{a-b}{a+b}\log\frac{1-p}{p}.
\]
Hence,
$X_n\rightarrow +\infty$ a.s.\ if and only if either $a > b$ and $p > 1/2$, or $a < b$ and $p<1/2$; 
$X_n\rightarrow -\infty$ a.s.\  if and only if either $a > b$ and $p < 1/2$, or $a < b$ and $p>1/2$; and 
$\{X_n\}$ is recurrent a.s.\ if and only if either $a = b$, or $p = 1/2$, or both. 

Next we study $\Em[S]$ to find the drift. In the context of Section~\ref{sec:generalsolution} the processes $\{U_i\}$ and $\{Y_i\}$ are identical and the function $g$ is the identity on the state space ${\cal U}=\{-1,1\}$. 
%
 %
Thus, the matrix  $D$ is given by $D=\diag(\sigma^{-1}, \sigma)$, and since $P$ is as in Section~\ref{ssec:swapmodel}, the matrix $PD$ is given by 
\[
PD=\left[
\begin{array}{cc}
(1-a)\sigma^{-1}&{a}\sigma\\
b\sigma^{-1}&(1-b)\sigma
\end{array}
\right],
\]
for which we have the following.

%

\begin{lemma}  \label{lem}
The matrix series $\sum_{n=0}^\infty (PD)^n$ converges to
\begin{equation} \label{seriesanswer}
(I-PD)^{-1}=\frac1{\det(I-PD)}\left[
\begin{array}{cc}
1-(1-b)\sigma&{a}\sigma\\
b\sigma^{-1}&1-{(1-a)}\sigma^{-1}
\end{array}
\right],
\end{equation}
with $\det(I -PD)=2-a-b-\left(\frac{1-a}\sigma+(1-b)\sigma\right)$,
iff $\sigma$ lies between 1 and~$\frac{1-a}{1-b}$.
\end{lemma}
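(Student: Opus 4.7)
The plan is to split the proof into three pieces: verify the displayed inverse formula, characterize convergence of the series via the spectral radius of $PD$, and identify the resulting condition on $\sigma$.

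The first piece is routine. Apply the standard $2\times 2$ cofactor formula to $I-PD$ to read off the matrix on the right-hand side of \eqref{seriesanswer} up to the prefactor $1/\det(I-PD)$. For the determinant, expand
\[
\det(I-PD) = \bigl(1-(1-a)\sigma^{-1}\bigr)\bigl(1-(1-b)\sigma\bigr) - ab
\]
and use the identity $(1-a)(1-b) - ab = 1 - a - b$ to collapse to the stated form $2 - a - b - (1-a)/\sigma - (1-b)\sigma$.

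For the second piece, recall that a matrix geometric series $\sum_{n\geq 0} A^n$ converges if and only if $\mbox{Sp}(A) < 1$, in which case its sum equals $(I-A)^{-1}$. So it suffices to show that $\mbox{Sp}(PD) < 1$ is equivalent to $\det(I-PD) > 0$. The characteristic polynomial of $PD$ is $p(\lambda) = \lambda^2 - s\lambda + d$ with
\[
s = \mathrm{tr}(PD) = (1-a)\sigma^{-1} + (1-b)\sigma, \qquad d = \det(PD) = (1-a)(1-b) - ab = 1 - a - b.
\]
I would first observe that the discriminant simplifies to $s^2 - 4d = \bigl((1-a)/\sigma - (1-b)\sigma\bigr)^2 + 4ab \geq 0$, so both eigenvalues are real. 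Since $s>0$, the larger eigenvalue $\lambda_+ = (s+\sqrt{s^2-4d})/2$ is positive, and a short manipulation gives $\lambda_+ < 1 \Leftrightarrow s < 1 + d = 2 - a - b \Leftrightarrow \det(I-PD) > 0$. The remaining inequality $\lambda_- > -1$ follows automatically from $s > 0$ and $|d| < 1$ (which holds since $a,b \in (0,1)$), so it contributes no further restriction.

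For the third piece, multiplying $\det(I-PD) > 0$ by $\sigma > 0$ and rearranging yields
\[
(1-b)\sigma^2 - (2-a-b)\sigma + (1-a) < 0.
\]
The heart of the calculation, and the only step that needs genuine insight rather than bookkeeping, is that the discriminant of this quadratic in $\sigma$ simplifies to $(2-a-b)^2 - 4(1-a)(1-b) = (a-b)^2$, a perfect square. Hence the two roots come out cleanly as $\sigma = 1$ and $\sigma = (1-a)/(1-b)$, and since the leading coefficient $1-b$ is positive the quadratic is negative precisely when $\sigma$ lies strictly between these two values — which is the stated condition, valid uniformly in the two sub-cases $a<b$ and $a>b$. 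The boundary case $a = b$ produces a double root at $\sigma = 1$ and an empty validity range, consistent with $\Em[U_0]=0$ and the recurrence of $\{X_n\}$ in that regime.
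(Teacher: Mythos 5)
Your proposal is correct and follows essentially the same route as the paper: reduce convergence to $\mathrm{Sp}(PD)<1$, show via the discriminant of the characteristic polynomial that the eigenvalues are real so the spectral radius is $\lambda_+$, translate $\lambda_+<1$ into the quadratic inequality $(1-b)\sigma^2-(2-a-b)\sigma+(1-a)<0$, and identify its roots as $1$ and $(1-a)/(1-b)$ (the paper factors the quadratic directly where you compute its discriminant $(a-b)^2$, a cosmetic difference). Your explicit cofactor verification of \eqref{seriesanswer} and the separate check that $\lambda_->-1$ are details the paper leaves implicit, and both are sound.
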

Note that the condition that $\sigma$ lies between 1 and~$\frac{1-a}{1-b}$ can either mean 
$1<\sigma<\frac{1-a}{1-b}$ (when $a<b$), or $\frac{1-a}{1-b}<\sigma<1$ (when $a>b$).

\medskip
\begin{proof}
The series $\sum_{n=0}^\infty (PD)^n$ converges if and only if $\mbox{Sp}(PD)<1$, where $\mbox{Sp}(\cdot)$ denotes the spectral radius $\max_i  \gvn \lambda_i \gvn $. The eigenvalues $\lambda_1, \lambda_2$ follow from 
\[
 \gvn \lambda I -PD \gvn =\lambda^2-A\lambda+(1-a-b)=0, \qquad \mbox{where} \qquad A=(1-a)\sigma^{-1}+(1-b)\sigma.
\]
The discriminant of this quadratic equation is
\[
A^2-4(1-a)(1-b)+4ab=\left(\frac{1-a}\sigma-(1-b)\sigma\right)^2+4ab>0,
\]
so the spectral radius is given by the largest eigenvalue,
\[
\mbox{Sp}(PD)=\frac{A+\sqrt{A^2-4(1-a-b)}}{2}.
\]
Clearly $\mbox{Sp}(PD)<1$ if and only if $\sqrt{A^2-4(1-a-b)}<2-A$, or equivalently $A<2-a-b$.
Substituting the definition of $A$ and multiplying by $\sigma$ this leads to 
\[
(1-b)\sigma^2 -(2-a-b)\sigma +(1-a)<0,
\]
or equivalently,
\[
(\sigma-1)\big((1-b)\sigma-(1-a)\big)<0.
\]
Since the coefficient of $\sigma^2$ in the above is $1-b>0$, the statement of the lemma now follows immediately.
\end{proof}

This leads to the following theorem.

\begin{thm} \label{thm:general}~
We distinguish between transient cases with and without drift, and the recurrent case as follows.
\begin{enumerate}
\item[1a.] If either $a>b$ and $p\in \big(\frac12, \frac{1-b}{(1-a)+(1-b)}\big)$ or $a<b$ and $p\in  \big(\frac{1-b}{(1-a)+(1-b)},\frac12 \big)$, then almost surely $\ds \lim_{n\ra \infty} {X_n} = \infty\;$ and
\begin{equation} \label{Markovdrift}
V =(2p-1)
\frac{(1-b)(1-p)-(1-a)p}{\left(b+\frac{a-b}{a+b}\right)(1-p) + \left(a-\frac{a-b}{a+b}\right)p} > 0\;.
\end{equation}
\item[1b.] If either 
$a>b$ and $p\in  \big(\frac{1-a}{(1-a)+(1-b)},\frac12 \big)$ or $a<b$ and $p\in  \big(\frac12, \frac{1-a}{(1-a)+(1-b)} \big)$, then  almost surely $\ds \lim_{n\ra \infty} {X_n} = -\infty\;$ and 
 \begin{equation} \label{Markovdrift2}
V = -(1-2p)\frac{(1-b)p-(1-a)(1-p)}{\left(b+\frac{a-b}{a+b}\right)p + \left(a-\frac{a-b}{a+b}\right)(1-p)} < 0\;.
\end{equation}
\item[2a.] If either $a>b$ and $p\in  \big[\frac{1-b}{(1-a)+(1-b)},1 \big]$ or $a<b$ and 
$p\in \big[0, \frac{1-b}{(1-a)+(1-b)} \big]$, then almost surely $\ds \lim_{n\ra \infty} {X_n} = \infty\;,$ but $V=0$.
\item[2b.] If either $a>b$ and $p\in \big[0, \frac{1-a}{(1-a)+(1-b)} \big]$ or $a<b$ and 
$p \in  \big[\frac{1-a}{(1-a)+(1-b)},1 \big]$, then almost surely $\ds \lim_{n\ra \infty} {X_n} = -\infty\;,$ but $V=0$.
\item[3.] Otherwise (when $a=b$ or $p=1/2$ or both),  $\{X_n\}$ is recurrent and $V = 0$. 
\end{enumerate}
\end{thm}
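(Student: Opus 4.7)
The plan is to apply the general machinery of Section~\ref{sec:generalsolution} together with Lemma~\ref{lem} (and its $F$-analogue) to the two-state Markov environment, and then combine with Theorems~\ref{thm:alili1}--\ref{thm:alili2}. Case~3 is already immediate: from the computation of $\Em[\log\sigma_0]=\frac{a-b}{a+b}\log\frac{1-p}{p}$ performed in the paragraph preceding the theorem, Theorem~\ref{thm:alili1} gives recurrence precisely when $a=b$ or $p=1/2$, and Theorem~\ref{thm:alili2}.3 then forces $V=0$. So I only need to treat the four transient cases.

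First I would identify, for the Markov environment, exactly when $\Em[S]<\infty$ and when $\Em[F]<\infty$. By Lemma~\ref{lem}, $\Em[S]<\infty$ iff $\sigma=(1-p)/p$ lies strictly between $1$ and $(1-a)/(1-b)$. Solving the two resulting linear inequalities in $p$ shows that $\Em[S]<\infty$ iff $p\in(\tfrac12,\tfrac{1-b}{(1-a)+(1-b)})$ when $a>b$, and iff $p\in(\tfrac{1-b}{(1-a)+(1-b)},\tfrac12)$ when $a<b$. Applying the same lemma with $\sigma$ replaced by $\sigma^{-1}$ and $p$ by $1-p$ (as justified at the end of Section~\ref{ssec:swapmodel}) gives the symmetric characterization of $\Em[F]<\infty$, yielding the $p$-ranges in case~1b. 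Crossing these ranges with the transience criterion from Theorem~\ref{thm:alili1} accounts for cases 1a/1b (where the corresponding expectation is finite) and, by elimination, for cases 2a/2b (where the process is transient but the relevant expectation diverges, so Theorem~\ref{thm:alili2}.3 yields $V=0$).

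For the two cases with nonzero drift I would compute $V=1/(2\Em[S]-1)$ (resp.\ $V=-1/(2\Em[F]-1)$) explicitly. The stationary distribution of $P$ is $\vect\pi=(b/(a+b),\,a/(a+b))$. Using the closed form for $(I-PD)^{-1}$ from \eqref{seriesanswer}, the vector $(I-PD)^{-1}\vect 1$ has components
\[
\tfrac{1+(a+b-1)\sigma}{\det(I-PD)}\quad\text{and}\quad \tfrac{1+(a+b-1)\sigma^{-1}}{\det(I-PD)},
\]
so that
\[
\Em[S]\;=\;\vect\pi\,(I-PD)^{-1}\vect 1\;=\;\frac{(a+b)+(a+b-1)(b\sigma+a\sigma^{-1})}{(a+b)\det(I-PD)}.
\]
Substituting $\sigma=(1-p)/p$ and $\det(I-PD)=2-a-b-(1-a)\sigma^{-1}-(1-b)\sigma$, clearing denominators, and simplifying $2\Em[S]-1$ should collapse to the expression in \eqref{Markovdrift}. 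The $F$-case (1b) proceeds identically after the $\sigma\leftrightarrow\sigma^{-1}$, $p\leftrightarrow 1-p$ substitution, producing \eqref{Markovdrift2}.

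The main obstacle is purely algebraic: reducing the rational function $1/(2\Em[S]-1)$ to the compact form in \eqref{Markovdrift}. The numerator $(2p-1)[(1-b)(1-p)-(1-a)p]$ and the denominator involving $\big(b+\tfrac{a-b}{a+b}\big)(1-p)+\big(a-\tfrac{a-b}{a+b}\big)p$ are not obviously what drops out of the symbolic computation, so I expect the bookkeeping (in particular, identifying $(1-p)\det(I-PD)\cdot p$ and grouping the $(a+b-1)$ terms correctly) to require care. As a sanity check I would verify that setting $a=\alpha$, $b=1-\alpha$ (which makes $\{U_i\}$ iid with $\Pm(U_i=1)=\alpha$) reduces \eqref{Markovdrift} to the iid formula \eqref{iid} of Theorem~\ref{thm:iid}, and that the drift vanishes continuously as $p$ approaches either endpoint of the nontrivial interval.
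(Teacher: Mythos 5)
Your proposal is correct and follows essentially the same route as the paper: the paper likewise obtains cases 1a/1b by computing $V^{-1}=2\Em[S]-1=\vect{\pi}(I-PD)^{-1}\vect{1}$ with $\vect{\pi}=\frac{1}{a+b}(b,a)$, reaching exactly your intermediate expression $\frac{(a+b)+(a+b-1)(b\sigma+a\sigma^{-1})}{(a+b)\det(I-PD)}$ before simplifying to \eqref{Markovdrift}, and handles 1b by the $\sigma\mapsto\sigma^{-1}$, $p\mapsto 1-p$ symmetry and the remaining cases via Lemma~\ref{lem} and Theorems~\ref{thm:alili1}--\ref{thm:alili2}. The only nuance worth noting is that for cases 2a/2b you need \emph{both} $\Em[S]=\infty$ and $\Em[F]=\infty$ to invoke Theorem~\ref{thm:alili2}.3, which indeed holds since the two finiteness intervals lie on opposite sides of $p=1/2$.
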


\begin{proof}
Substitution of (\ref{seriesanswer}) and $\vect{\pi}=\frac1{a+b}(b,a)$ in (\ref{series}) leads to 
\begin{eqnarray*}
V^{-1}&=&2\Em[S]-1\\
&=& \frac2{(a+b)\det(I-PD)}\ (b,a) \left[
\begin{array}{cc}
1-(1-b)\sigma&a\sigma\\
b\sigma^{-1}&1-{(1-a)}\sigma
\end{array}
\right]
\left(\!\!\!
\begin{array}{c}
1\\1
\end{array}\!\!\!
\right)-1\\
&=& \frac2{\det(I-PD)} \frac{(a+b)-(1-a-b)(b\sigma+a\sigma^{-1})}{a+b}-1\\
&=& \frac{1+\sigma}{1-\sigma}
\frac{\left(b+\frac{a-b}{a+b}\right)\sigma +
  \left(a-\frac{a-b}{a+b}\right)}{(1-b)\sigma-(1-a)}\\
&=& \frac{1}{2p-1}
\frac{\left(b+\frac{a-b}{a+b}\right)(1-p) + \left(a-\frac{a-b}{a+b}\right)p}{(1-b)(1-p)-(1-a)p}.
\end{eqnarray*}
When $\sigma$ lies between $1$ and $\frac{1-a}{1-b}$, i.e. when $p=(1+\sigma)^{-1}$ lies between $1/2$ and $(1-b)/((1-a)+(1-b))$, it follows by Lemma~\ref{lem} that the process has positive drift, given by the reciprocal of the above. This proves (\ref{Markovdrift}). The proof of (\ref{Markovdrift2}) follows from replacing $\sigma$ by $\sigma^{-1}$ and $p$ by $1-p$, and adding a minus sign. The other statements follow immediately.
\end{proof}

When we take $a+b=1$ we obtain the iid case of the previous section, with $\alpha=a/(a+b)$. Indeed the theorem then becomes identical to Theorem~\ref{thm:iid}. In the following subsection we make a comparison between the Markov case and the iid case.



\subsubsection{Comparison with the iid environment}
To study the impact of the (Markovian) dependence, we reformulate the expression for the drift in Theorem~\ref{thm:general}. Note that the role of $\alpha$ in the iid case is played by $P(U_0=1)=a/(a+b)$ in the Markov case. Furthermore, we can show that the correlation coefficient between two consecutive $U_i$'s satisfies
\[
\rho\equiv \rho(U_0, U_1)=\frac{\cov(U_0, U_1)}{\var(U_0)}=\frac{\frac{a+b-4ab}{a+b}-\left(\frac{a-b}{a+b}\right)^2}{1-\left(\frac{a-b}{a+b}\right)^2}=1-a-b.
\]
So $\rho$ depends on $a$ and $b$ only through their sum $a+b$, with
extreme values 1 (for $a=b=0$; i.e., $U_i\equiv U_0$) and $-1$ (for
$a=b=1$; that is, $U_{2i}\equiv U_0$ and $U_{2i+1}\equiv -U_0$). The intermediate case $a+b=1$ leads to  $\rho=0$ and corresponds to the iid case, as we have seen before.
To express $V$ in terms of $\alpha$ and $\rho$ we solve the system of equations 
$\frac{a}{a+b}=\alpha$ and $1-a-b=\rho$, leading to the solution 
\begin{align*}
a&=(1-\rho)\alpha\\
b&=(1-\rho)(1-\alpha).
\end{align*}
Substitution in the expression for $V$ (here in case of positive drift only, see (\ref{Markovdrift})) and rewriting yields
\[
V =(2p-1)
\frac{\alpha-p + \rho(1-\alpha -p)}{\big(\alpha(1-p) + (1-\alpha)p\big) (1+\rho) - \rho}. 
\]
This enables us not only to immediately recognize the result
(\ref{iid}) for the iid case (take $\rho=0$), but also to study the
dependence of the drift $V$ on $\rho$. 
Note that due to the restriction that $a$ and $b$ are probabilities,
it must hold that $\rho > \max\{1 - 1/\alpha, 1 - 1/(1-\alpha)\}$.

Figures~\ref{fig:rwre3Vversusp} and~\ref{fig:rwre3Vversusrho}
illustrate 
various aspects of the difference between iid and Markov cases. 
Clearly, compared to the iid case (for the same value of $\alpha$),
the Markov case with positive correlation coefficient has lower drift,
but also a lower `cutoff value' of $p$ at which the drift becomes
zero. 
For negative correlation coefficients we see a higher cutoff value, but not all values of $\alpha$ are possible (since we should have $a<1$). 
Furthermore, for weak correlations the drift (if it exists) tends to be larger than for strong correlations (both positive and negative), depending on $p$ and $\alpha$. 
%
 Note that Figure~\ref{fig:rwre3Vversusrho} seems to suggest there are two cutoff values 
 in terms of the correlation coefficient. However, it should be realized that drift curves corresponding to some $\alpha$ are no longer drawn for negative correlations since the particular value of $\alpha$ cannot be attained. E.g., when $\rho$ is close to $-1$, then $a$ and $b$ are both close to 1, hence $\alpha$ can only be close to 1/2.

\begin{figure}[H]
\centering
\includegraphics[width=0.8\linewidth]{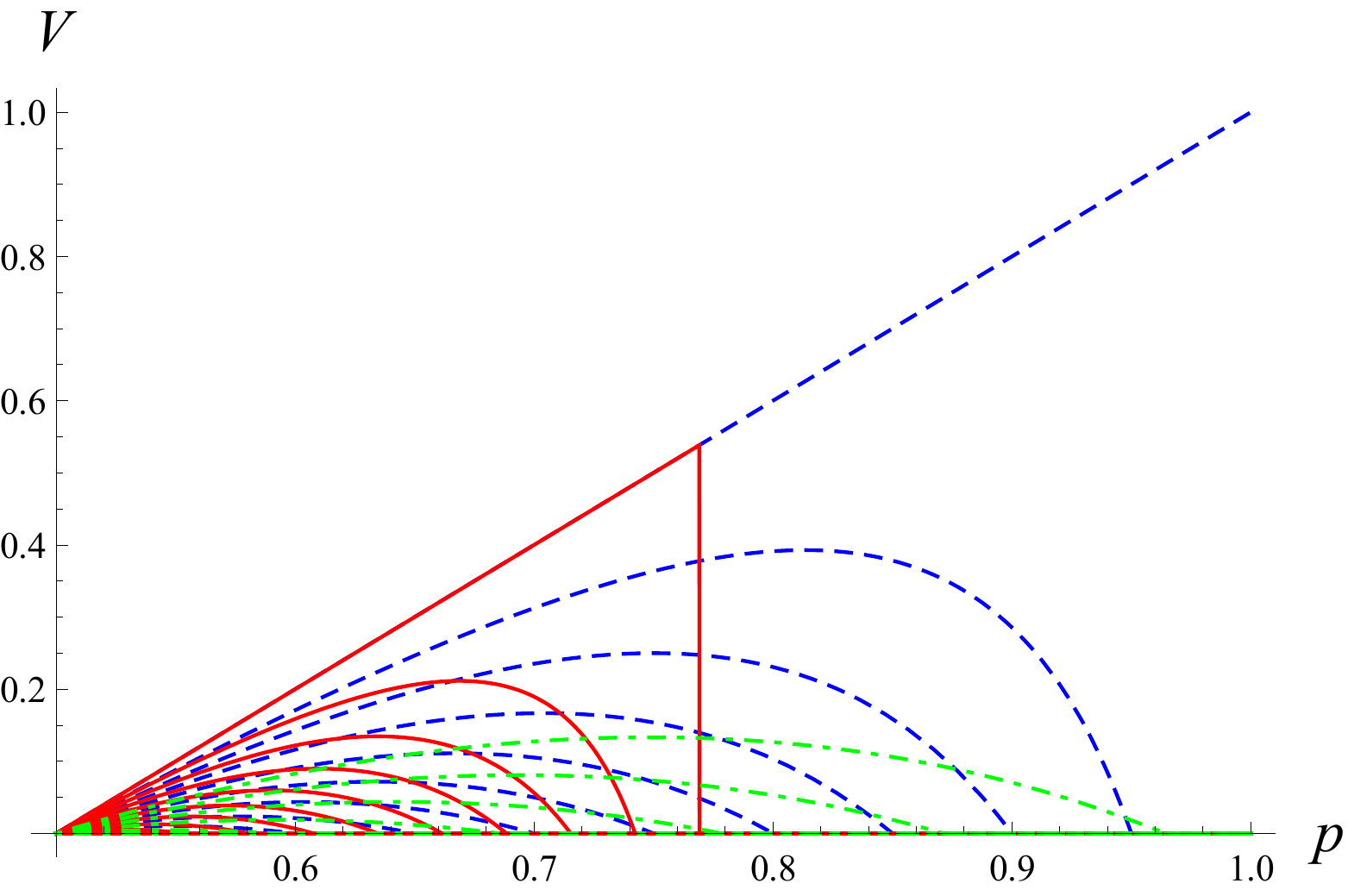}
\caption{Drift for $\rho=0$ (blue, dashed), $\rho=0.3$ (red,solid), and  $\rho=-0.3$ (green,dotdashed) as a function
  of $p$. From highest to lowest curves for $\alpha = 1, 0.95, \ldots,0.55$ (for $\rho=0$ and $\rho=0.3$), and for $\alpha = 0.75, 0.70,\ldots,0.55$ (for $\rho=-0.3$).}
\label{fig:rwre3Vversusp}
\end{figure}

\begin{figure}[H]
\centering
\includegraphics[width=0.8\linewidth]{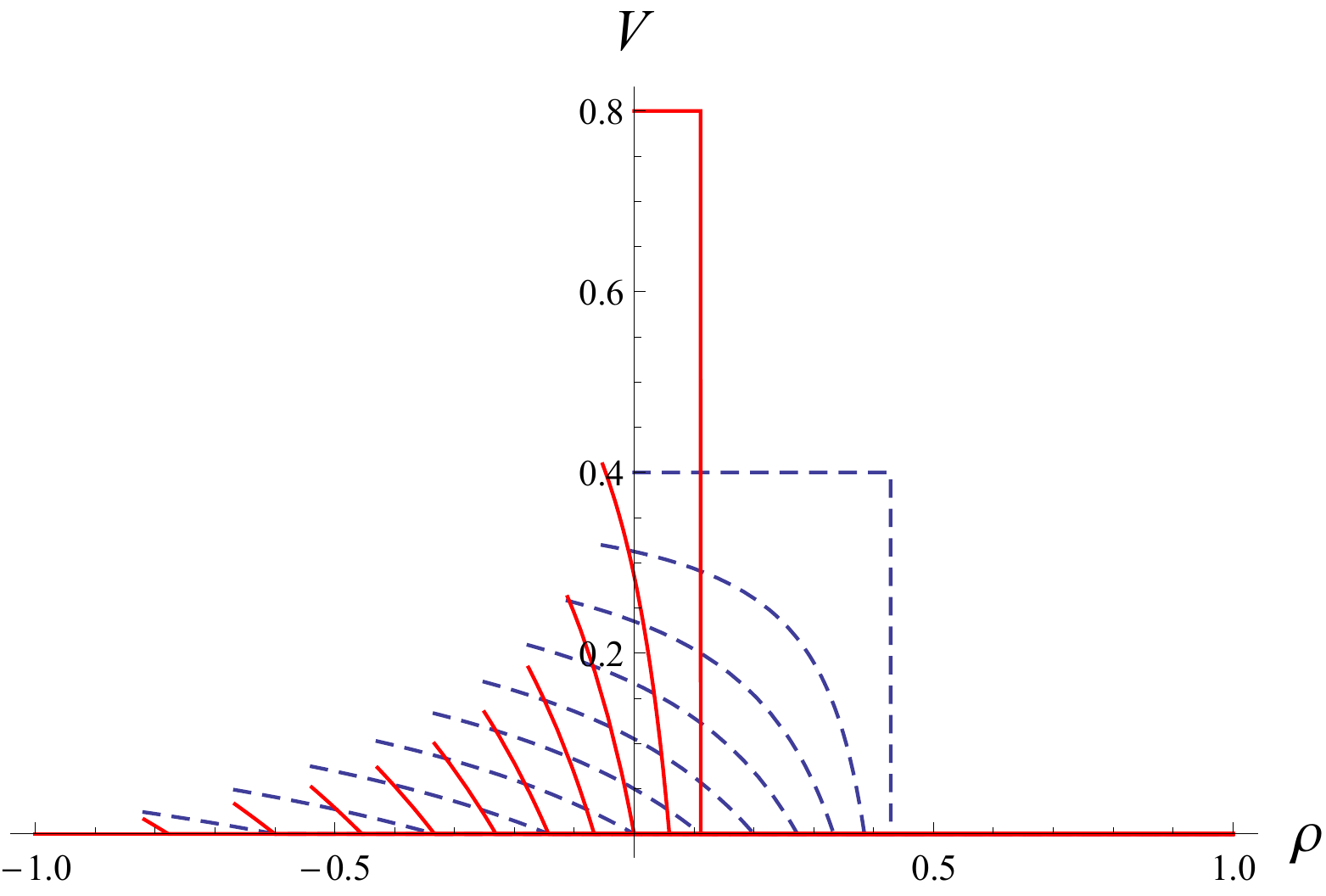}
\caption{Drift for $p=0.7$ (blue, dashed) and $p=0.9$ (red, solid) as a function of the correlation
  coefficient $\rho$, for $\alpha = 1, 0.95, \ldots,0.55$ (from
  highest to lowest curves). The values at $\rho = 0$ give the drift
  for the independent case. Note that $\rho$ must be greater than or
  equal to $1/\alpha$.}
\label{fig:rwre3Vversusrho}
\end{figure}

\subsection{2-dependent environment}  \label{sec:2dependence}

In this section we treat the $k$-dependent environment for $k=2$. For this case we have the transition probabilities
\[
P_{u_{i-2}u_{i-1},u_i}=\Pm(U_{i}=u_i \gvn U_{i-2}=u_{i-2}, U_{i-1}=u_{i-1}),\qquad u_j \in\{-1,1\},
\]
so that the one-step transition matrix of the Markov chain $\{Y_i, i \in \Z\}$ with $Y_i=(U_{i-1}, U_i)$ is given by
\[
P=\left[
\begin{array}{cccc}
P_{-1-1,-1}&P_{-1-1,+1}&0&0\\
0&0&P_{-1+1,-1}&P_{-1+1,+1}\\
P_{+1-1,-1}&P_{+1-1,+1}&0&0\\
0&0&P_{+1+1,-1}&P_{+1+1,+1}
\end{array}
\right]
=
\left[
\begin{array}{cccc}
1-a_-&a_-&0&0\\
0&0&b_-&1-b_-\\
1-a_+&a_+&0&0\\
0&0&b_+&1-b_+
\end{array}
\right].
\]
Thus, the model has five parameters,  $a_-, a_+, b_-, b_+,$ and $p$.
Also note that the special case $a_-=a_+(=a)$ and $b_-=b_+(=b)$ corresponds to the (1-dependent) Markovian case in Section~\ref{sec:markovdependence}.

We first note that the stationary distribution (row) vector $\vect{\pi}$ is given by
\begin{equation} \label{pi2dependent}
\vect{\pi}=\left(2+\frac{1-a_+}{a_-} + \frac{1-b_-}{b_+}\right)^{-1} \left(\frac{1-a_+}{a_-}, 1, 1, \frac{1-b_-}{b_+}\right),
\end{equation}
so assuming stationarity we have $\Pm(U_0=1)={\pi}_{-1,1}+{\pi}_{1,1}$
and $\Pm(U_0=-1)=\pi_{-1,-1}+\pi_{1,-1}$. It follows that
$\Pm(U_0=1)>\Pm(U_0=-1)$ if and only if
$\frac{a_-}{1-a_+}>\frac{b_+}{1-b_-}$. This is important to determine
the sign of $\Em[\log \sigma_0]$, which satisfies (with $\sigma=\frac{1-p}p$ as before), 
\[
\Em[\log \sigma_0] = \big(2\Pm(U_0=1)-1\big)\ \log \sigma.
\]
Hence, 
$X_n\rightarrow +\infty$ a.s.\ if and only if either $\frac{a_-}{1-a_+}>\frac{b_+}{1-b_-}$ and $p> 1/2$, or $\frac{a_-}{1-a_+}<\frac{b_+}{1-b_-}$ and $p < 1/2$;
$X_n\rightarrow -\infty$ a.s.\ if and only if either $\frac{a_-}{1-a_+}>\frac{b_+}{1-b_-}$ and $p < 1/2$, or 
$\frac{a_-}{1-a_+}<\frac{b_+}{1-b_-}$ and $p > 1/2$; and
$\{X_n\}$ is recurrent a.s.\ if and only if either $\frac{a_-}{1-a_+}=\frac{b_+}{1-b_-}$, or $p=1/2$, or both.

Next we consider the drift. As before we have when $\Em[S]<\infty$ that $V^{-1} =2\Em[S]-1$. So in view of (\ref{series}) we need to consider the matrix $PD$ where $D=\diag(\sigma^{-1}, \sigma, \sigma^{-1}, \sigma)$, so
\[
PD=
\left[
\begin{array}{cccc}
(1-a_-)\sigma^{-1}&a_-\sigma&0&0\\
0&0&b_-\sigma^{-1}&(1-b_-)\sigma\\
(1-a_+)\sigma^{-1}&a_+\sigma&0&0\\
0&0&b_+\sigma^{-1}&(1-b_+)\sigma
\end{array}
\right]
\]
and hence
\begin{eqnarray*}
V^{-1}&=& 2 \vect{\pi} \left(\sum_{n=0}^\infty (PD)^n\right) \ \vect{1}\ -1\\
&=& 2 \vect{\pi} (I-PD)^{-1} \ \vect{1}\ -1
\end{eqnarray*}
if Sp$(PD)<1$. Unfortunately, the eigenvalues of $PD$ are now the
roots of a 4-degree polynomial, which are hard to find
explicitly. However, using Perron--Frobenius theory and the implicit
function theorem it is possible to prove the following lemma, which has the same structure as in the Markovian case.

\begin{lemma}  \label{lem2dep}
The matrix series $\sum_{n=0}^\infty (PD)^n$ converges to
$(I-PD)^{-1}$, which is  
{\tiny
\[
\begingroup
\renewcommand{\arraystretch}{2.5}
\begin{bmatrix}
 1 - a_+ b_- - \sigma + \sigma B & a_-  \sigma ((b_+ -1) \sigma+1) & a_-  (-\sigma b_-+b_-+b_+ 
   \sigma) & -a_-  (b_--1) \sigma^2 \\
 \frac{(a_+ -1) (b_- (\sigma-1)-b_+  \sigma)}{\sigma^2} & \frac{(a_- +\sigma-1) ((b_+ -1) \sigma+1)}{\sigma} & -\frac{(a_- +\sigma-1)
   (b_- (\sigma-1)-b_+  \sigma)}{\sigma^2} & -(b_--1) (a_- +\sigma-1) \\
 -\frac{(a_+ -1) ((b_+ -1) \sigma+1)}{\sigma} & (a_- +a_+  (\sigma-1)) ((b_+ -1) \sigma+1) & \frac{(a_- +\sigma-1) ((b_+ -1)
   \sigma+1)}{\sigma} & -(b_--1) (a_- +a_+  (\sigma-1)) \sigma \\
 \frac{b_+ -a_+  b_+ }{\sigma^2} & \frac{b_+  (a_- +a_+  (\sigma-1))}{\sigma} & \frac{b_+  (a_- +\sigma-1)}{\sigma^2} &
   \frac{1 - A  + \sigma - a_+ b_- \sigma}{\sigma} \\
\end{bmatrix}
\endgroup
\]
}
\!\!divided by $\det(I -PD)=-\sigma^{-1}(\sigma-1)\big((1-B)\sigma-(1-A)\big)$,
iff $\sigma$ lies between 1 and~$\frac{1-A}{1-B}$. Here, $A=a_-+a_+b_--a_-b_-$ and $B=b_++a_+b_--a_+b_+$.
\end{lemma}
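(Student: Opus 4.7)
The statement divides into two parts: (i) a direct algebraic verification of the displayed formula for $(I-PD)^{-1}$ together with the factorisation of $\det(I-PD)$, and (ii) a characterisation of when $\mathrm{Sp}(PD)<1$, which is the condition under which the matrix series converges.

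Stage (i) is essentially bookkeeping. Because rows $1,3$ of $P$ have nonzero entries only in columns $1,2$, and rows $2,4$ only in columns $3,4$, the Laurent polynomial $\det(I-PD)$ has only a handful of surviving terms. Expanding $\det(I-PD)$ by cofactors along the first row and collecting in $\sigma$ using the definitions $A=a_-+a_+b_--a_-b_-$ and $B=b_++a_+b_--a_+b_+$ should give the claimed factorisation
\[
\det(I-PD)=-\sigma^{-1}(\sigma-1)\bigl((1-B)\sigma-(1-A)\bigr),
\]
from which the two roots $\sigma=1$ and $\sigma=(1-A)/(1-B)$ are read off. The displayed $4\times 4$ numerator of $(I-PD)^{-1}$ is then the adjugate, each of whose sixteen $3\times 3$ cofactors simplifies through the same sparsity pattern and can be checked entry by entry.

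For stage (ii), the plan is to combine Perron--Frobenius theory with the implicit function theorem. Assuming $a_\pm,b_\pm\in(0,1)$, the matrix $PD$ is entrywise non-negative and its support graph is strongly connected, so $PD$ is irreducible; hence $\lambda^\star(\sigma):=\mathrm{Sp}(PD)$ is a simple positive eigenvalue depending analytically on $\sigma>0$. By stage (i), the only $\sigma>0$ at which $1$ is an eigenvalue of $PD$ are $\sigma=1$ and $\sigma=(1-A)/(1-B)$; these are therefore the only zeros of $\sigma\mapsto 1-\lambda^\star(\sigma)$. At $\sigma=1$ one has $PD=P$ stochastic, so $\lambda^\star(1)=1$ with right and left Perron eigenvectors $\vect{1}$ and $\vect{\pi}$. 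Applying the implicit function theorem to $\det(\lambda I-PD)=0$ at $(\sigma,\lambda)=(1,1)$ and evaluating the derivative through these eigenvectors gives $\tfrac{d\lambda^\star}{d\sigma}(1)\neq 0$, with the sign showing that $\lambda^\star$ strictly decreases below $1$ as $\sigma$ moves from $1$ into the interval toward $(1-A)/(1-B)$. Since $\mathrm{tr}(PD)=(1-a_-)\sigma^{-1}+(1-b_+)\sigma\to\infty$ as $\sigma\to 0^+$ or $\sigma\to\infty$, we also have $\lambda^\star(\sigma)\geq\tfrac14\mathrm{tr}(PD)\to\infty$ at the extremes. Continuity of $\lambda^\star$ together with the absence of any further crossings of the level $1$ then forces $\lambda^\star<1$ exactly on the open interval between $1$ and $(1-A)/(1-B)$ and $\lambda^\star\geq 1$ outside it, yielding the iff.

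The main obstacle is the global control of $\lambda^\star(\sigma)$: the local perturbation argument at $\sigma=1$ only produces $\lambda^\star<1$ in a neighbourhood, and ruling out re-crossings elsewhere in the interval rests crucially on stage (i) producing exactly the two stated roots. The algebraic labour of stage (i) is elementary but the most error-prone part of the proof; once the factorisation is in hand, the Perron--Frobenius and implicit function theorem arguments are structurally identical to (and a natural generalisation of) those used in the $2\times 2$ Markov case of Lemma~\ref{lem}.
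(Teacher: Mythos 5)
Your proposal is correct and follows essentially the same route as the paper: verify the factorisation of $\det(I-PD)$ algebraically, then combine Perron--Frobenius irreducibility with the implicit function theorem at $(\sigma,\lambda)=(1,1)$ to locate where the spectral radius drops below $1$. Your trace lower bound at $\sigma\to 0^+$ and $\sigma\to\infty$ is a nice extra touch that makes the "outside the interval" direction of the iff fully explicit, where the paper leaves it to continuity and the absence of further level-$1$ crossings.
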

\begin{proof}
To find out for which values of $\sigma$ we have Sp$(PD)<1$, first we denote the (possibly complex) eigenvalues  of $PD$  by  $\lambda_i(\sigma), i=0, 1, 2, 3, $ as continuous functions of $\sigma$.
Since $PD$ is a nonnegative irreducible matrix for any $\sigma>0$, we can apply Perron--Frobenius to claim that there is always a unique eigenvalue with largest absolute value (the other $|\lambda_i|$ being strictly smaller), and that this eigenvalue is real and positive (so in fact it always equals Sp$(PD)$). When $\sigma=1$ the matrix is stochastic and we know this eigenvalue to be 1, and denote it by $\lambda_0(1)$. 

Now, moving $\sigma$ from 1 to any other positive value, $\lambda_0(\sigma)$ {\em must} continue to play the role of the Perron--Frobenius eigenvalue; i.e., none of the other $\lambda_i(\sigma)$ can at some point take over this role. If this were not true, then the continuity of the $\lambda_i(\sigma)$ would imply that one value $\hat \sigma$ exists where (say) $\lambda_1$ `overtakes' $\lambda_0$, meaning that $|\lambda_1(\hat \sigma)|=|\lambda_0(\hat \sigma)|$, which is in contradiction with the earlier Perron--Frobenius statement.

Thus, it remains to find out when $\lambda_0(\sigma)<1$, which can be established using the implicit function theorem, since 
$\lambda_0$ is implicitly defined as a function of $\sigma$ by $f(\sigma, \lambda_0)=0$, with $f(\sigma, \lambda)=\det(\lambda I-PD)$ together with $\lambda_0(1)=1$. Using $\det(D)=1$, we find that
\begin{align*}
f(\sigma, \lambda)=&\det((\lambda D^{-1}-P)D) =\det(\lambda D^{-1}-P)=\\
=&\ \sigma[\lambda(a_+b_--a_+b_+)+\lambda^3(b_+-1)]\\
&+\sigma^{-1}[\lambda(a_+b_--a_-b_-)+\lambda^3(a_--1)]\\
&+\lambda^4+(1-a_--b_++a_-b_+-a_+b_-)\lambda^2+a_-b_--a_-b_+-a_+b_-+a_+b_+.
\end{align*}
Setting $\lambda=1$ in this expression gives $\det(I-PD)$ as given in the lemma, with two roots for $\sigma$. Thus, there is only an eigenvalue 1 when $\sigma=1$, which we already called $\lambda_0(1)$, or when $\sigma=\frac{1-A}{1-B}$. In the latter case this must be $\lambda_0(\frac{1-A}{1-B})$, i.e., it cannot be $\lambda_i(\frac{1-A}{1-B})$ for some $i\neq0$, again due to continuity. As a result we have either $\lambda_0(\sigma)>1$ or $\lambda_0(\sigma)<1$ when $\sigma$ lies between 1 and~$\frac{1-A}{1-B}$. Whether $\frac{1-A}{1-B}<1$ or $\frac{1-A}{1-B}>1$ depends on the parameters:
\begin{equation} \label{sigmalocation}
\frac{1-A}{1-B}>1\quad\Leftrightarrow\quad\frac{a_-}{1-a_+}<\frac{b_+}{1-b_-},
\end{equation}
where we used that $1-B=1-b_+-a_+b_-+a_+b_+>(1-b_+)(1-a_+)>0$. Now we
apply the implicit function theorem:
\begin{align}
\frac{\di\lambda_0(\sigma)}{\di\sigma}\Big|_{\sigma=1}
&=
-\left.
\frac{\ \
\frac{\partial f(\sigma,\lambda_0)}{\partial \sigma}\ \ 
}{
\frac{\partial f(\sigma,\lambda_0)}{\partial \lambda_0}
}\right|_{\sigma=1, \lambda_0=1}
\\
&=
-\frac{b_+(1-a_+)-a_-(1-b_-)}{a_-(1-b_-+b_+)+b_+(1-a_++a_-)}\\
&=
\frac{\frac{a_-}{1-a_+}-\frac{b_+}{1-b_-}}{\frac{a_-}{1-a_+}\left(1+\frac{b_+}{1-b_-}\right) +\frac{b_+}{1-b_-}\left(1+\frac{a_-}{1-a_+}\right) },
\end{align}
which due to (\ref{sigmalocation}) is $<0$ iff $\frac{1-A}{1-B}>1$ and
is $>0$ iff $\frac{1-A}{1-B}<1$, so that indeed
Sp$(PD)=\lambda_0(\sigma)<1$ if and only if $\sigma$ lies between 1
and $\frac{1-A}{1-B}$.
 
\end{proof}

Note that for the case  $\frac{a_-}{1-a_+}=\frac{b_+}{1-b_-}$ the
series never converges, as there is no drift,
$\Pm(U_0=1)=\Pm(U_0=-1)$. This corresponds to $a=b$ in the Markovian case and $\alpha=1/2$ in the iid case.

We conclude that if $\sigma$ lies between 1 and $\frac{1-A}{1-B}$, or equivalently, if $p$ lies between 1/2 and $\frac{1-B}{1-A+1-B}$, the drift is given by  $V=(2 \vect{\pi} (I-PD)^{-1} \ \vect{1}\ -1)^{-1}$, where $\vect{\pi}$ is given in (\ref{pi2dependent}) and  $(I-PD)^{-1}$ follows from Lemma~\ref{lem2dep}. Using computer algebra, this can be shown to equal
\begin{equation}  \label{2depdrift}
V=(2 p-1)\, \frac{d\, p(1-p)\big((1-B)(1-p)-(1-A)p\big) }
{\sum_{i=0}^3 c_i \, p^i}
\end{equation} 
where
\[
\begin{split}
d &= a_-  (b_- - b_+ -1)+ b_+(a_+ -a_- -1) \\
c_0 &= 2 a_- b_+  (b_-  - b_+) \\
c_1 & =-c_0( 2+ a_+ +a_-)+(B-A)(1-B)\\ 
c_2 & =-c_0-c_1-c_3\\
c_3 & = (B-A) (2-A-B).
\end{split}
\] 
Including the transience/recurrence result from the first part of this section, and including the cases with negative drift, we obtain the following analogon to Theorems~\ref{thm:iid} and~\ref{thm:general}.

\begin{thm} \label{thm:2-dependent}
We distinguish between transient cases with and without drift, and the recurrent case in the same way as for the Markov environment in Theorem~\ref{thm:general}. In particular, all statements (1a.), \ldots, (3) in Theorem~\ref{thm:general} also hold for the 2-dependent environment if we replace $a$ and $b$ by $A$ and $B$ respectively,  (\ref{Markovdrift}) by (\ref{2depdrift}), and (\ref{Markovdrift2}) by minus the same expression (\ref{2depdrift}) but with $p$ replaced by $1-p$.
\end{thm}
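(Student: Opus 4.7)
The argument mirrors that of Theorem~\ref{thm:general} almost exactly, and most of the pieces are already assembled in the preceding discussion and in Lemma~\ref{lem2dep}. My plan breaks into four pieces.

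First, for the transience/recurrence trichotomy (cases~1a/1b vs.\ 2a/2b vs.\ 3), the work is essentially finished in the paragraphs preceding Lemma~\ref{lem2dep}: the sign of $\Em[\log\sigma_0]=(2\Pm(U_0=1)-1)\log\sigma$ was already characterized in terms of $\frac{a_-}{1-a_+}$ vs.\ $\frac{b_+}{1-b_-}$ via the stationary vector~(\ref{pi2dependent}). I would just verify the algebraic identity $A-B=a_-(1-b_-)-b_+(1-a_+)$, so that $\frac{a_-}{1-a_+}>\frac{b_+}{1-b_-}\Leftrightarrow A>B$, after which Theorem~\ref{thm:alili1} reproduces exactly the statement of Theorem~\ref{thm:general} with $a,b$ replaced by $A,B$.

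Second, for the positive-drift formula in case~1a, I would invoke the master formula from Section~\ref{sec:generalsolution}: when $\mathrm{Sp}(PD)<1$, the drift equals $V=1/(2\Em[S]-1)$ with $\Em[S]=\vect{\pi}(I-PD)^{-1}\vect{1}$. By Lemma~\ref{lem2dep} this convergence region is exactly $\sigma$ between $1$ and $(1-A)/(1-B)$, which under $\sigma=(1-p)/p$ translates into the $p$-interval stated in~1a. Substituting~(\ref{pi2dependent}) and the explicit inverse from Lemma~\ref{lem2dep} into $2\vect{\pi}(I-PD)^{-1}\vect{1}-1$, I would collect over the common denominator $\det(I-PD)=-\sigma^{-1}(\sigma-1)[(1-B)\sigma-(1-A)]$ and simplify; after converting $\sigma\mapsto(1-p)/p$, this should recover $V^{-1}$ equal to the reciprocal of~(\ref{2depdrift}).

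Third, case~1b follows from the symmetry $\Em[F]=\Em[S]\big|_{\sigma\to\sigma^{-1}}$ noted after~(\ref{ES-swap}), a consequence of stationarity under a shift by $n+1$ positions (no reversibility of $\bU$ is needed). Since $\sigma\to\sigma^{-1}$ corresponds to $p\to 1-p$ and $A,B,d,c_0,\ldots,c_3$ are all $p$-independent, Theorem~\ref{thm:alili2}(2) immediately delivers the negative-drift formula as $-V_{\text{1a}}(p\to 1-p)$, on the $p$-interval obtained by mirroring through $1/2$. Cases~2a, 2b, and 3 then follow because outside these two intervals $\mathrm{Sp}(PD)\geq 1$ (by Lemma~\ref{lem2dep} and its $\sigma\to\sigma^{-1}$ counterpart), so both $\Em[S]$ and $\Em[F]$ diverge and Theorem~\ref{thm:alili2}(3) forces $V=0$; combining with the first step's transience/recurrence classification separates the zero-drift transient cases from the recurrent one.

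The main obstacle is the algebraic simplification in step~2: the $1\times 4$ by $4\times 4$ by $4\times 1$ product produces a ratio of lengthy polynomials in $\sigma,a_-,a_+,b_-,b_+$, and recognizing the compact factorization with numerator $d\,p(1-p)[(1-B)(1-p)-(1-A)p]$ and denominator $\sum_{i=0}^{3} c_i p^i$ is, as the authors indicate, best handled by computer algebra. Everything else is routine bookkeeping given Lemma~\ref{lem2dep} and the machinery of Section~\ref{sec:generalsolution}.
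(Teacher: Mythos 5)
Your proposal is correct and follows essentially the same route as the paper, which presents no separate proof for this theorem but assembles it from exactly the ingredients you list: the sign analysis of $\Em[\log\sigma_0]$ via the stationary vector (\ref{pi2dependent}), the convergence region from Lemma~\ref{lem2dep} combined with the master formula $V^{-1}=2\vect{\pi}(I-PD)^{-1}\vect{1}-1$ simplified by computer algebra, and the $\sigma\mapsto\sigma^{-1}$, $p\mapsto 1-p$ symmetry for the negative-drift case. Your explicit check that $A-B=a_-(1-b_-)-b_+(1-a_+)$, linking the condition $\frac{a_-}{1-a_+}\gtrless\frac{b_+}{1-b_-}$ to $A\gtrless B$, is a small but genuinely useful addition that the paper leaves implicit.
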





\subsubsection{Comparison with the Markov environment}
To facilitate a comparison between the drifts for the 
two-dependent and Markov
environments it is convenient to write the  
probability distribution vector of
$(U_0,U_1,U_2)$ as $\vect{\pi} R$, where  $\vect{\pi}$ is the distribution vector of $(U_0,U_1)$, see (\ref{pi2dependent}), and
\[
R = 
\left[
\begin{array}{cccccccc}
1-a_-&a_-&0&0&  0 & 0 & 0 & 0 \\
0&0&b_-&1-b_- & 0 & 0 & 0 & 0\\
0 & 0 & 0 & 0 & 1-a_+&a_+&0&0\\
0 & 0 & 0 & 0& 0&0&b_+&1-b_+
\end{array}
\right].
\]
Thus, $\vect{\pi} R=c \left(\frac{(1-a_+)(1-a_-)}{a_-}, 1-a_+, b_-,1-b_-,1-a_+, a_+, 1-b_-, \frac{(1-b_-)(1-b_+)}{b_+}\right),$ where $c=\left(2+\frac{1-a_+}{a_-} + \frac{1-b_-}{b_+}\right)^{-1}$. If we also define 
\[
M_0 = 
\begin{bmatrix}
1 & 0 \\
1 & 0 \\
1 & 0 \\
1 & 0 \\
0 & 1 \\
0 & 1 \\
0 & 1 \\
0 & 1 \\
\end{bmatrix},
M_{01} = 
\begin{bmatrix}
1 & 0 & 0 & 0 \\
1 & 0 & 0 & 0 \\
0 & 1 & 0 & 0 \\
0 & 1 & 0 & 0 \\
0 & 0 & 1 & 0\\
0 & 0 & 1 & 0\\
0 & 0 & 0 & 1\\
0 & 0 & 0 & 1\\
\end{bmatrix}, 
M_{02} = 
\begin{bmatrix}
1 & 0 & 0 & 0 \\
0 & 1 & 0 & 0 \\
1 & 0 & 0 & 0 \\
0 & 1 & 0 & 0 \\
0 & 0 & 1 & 0\\
0 & 0 & 0 & 1\\
0 & 0 & 1 & 0\\
0 & 0 & 0 & 1\\
\end{bmatrix}, 
\]
then the probability distribution vector of $U_0$, $(U_0,U_1)$, and
$(U_0,U_2)$ are respectively given by  
{\footnotesize
\[
\begin{split}
\vect{\pi} R M_0&=c \left(\frac{1-a_+}{a_-}+1, \frac{1-b_-}{b_+}+1\right),\\
\vect{\pi} R M_{01}=\vect{\pi}&=c \left(\frac{1-a_+}{a_-}, 1, 1, \frac{1-b_-}{b_+}\right),\\
\vect{\pi} R M_{02}&=c \left(\frac{(1-a_+)(1-a_-)}{a_-}+b_-,2-a_+- b_-,2-a_+- b_-, a_+ + \frac{(1-b_-)(1-b_+)}{b_+}\right).
\end{split}
\]
} 
\!\!\!Various characteristics of the distribution of $(U_0,U_1,U_2)$ are now
easily found. In particular, 
the probability $\Pm(U_0=1)$ is 
\[
\alpha = \frac{a_- (1-b_- +b_+ )}
{a_- (1-b_- +b_+ )   + b_+(1-a_+ +a_-)},  
\]
the correlation coefficient between $U_0$ and $U_1$ is 
\[
\rho_{01} = 1-\frac{a_- }{a_- +1-a_+}-\frac{b_+}{ b_++ 1-b_-}, 
\]
the correlation coefficient between $U_0$ and $U_2$ is 
\[
\begin{split}
\rho_{02} &= 1-(2-a_+-b_-)\left(
\frac{a_- }{a_- +1-a_+}+\frac{b_+}{ b_+ +1-b_-}
\right)\\
&=1-(2-a_+-b_-)(1-\rho_{01}),  
\end{split}
\]
and  $\Em[U_0 U_1 U_2]$ is 
\[
e_{012} = \frac{4 a_-  b_+(b_- -a_+) + 
a_- (1-b_- +b_+ )   - b_+(1-a_+ +a_-)\ }
{\phantom{4 a_-  b_+(b_- -a_+) + }\ a_- (1-b_- +b_+ )   + b_+(1-a_+ +a_-)\ }\;.
\]
The original parameters can  be expressed in terms of $\alpha,
\rho_{01}, \rho_{02}$, and $e_{012}$  as follows: 
\[
\begin{split}
a_- & =   -\frac{2 \alpha (2 \alpha (\rho_{02}-1)-2
   \rho_{02}+1)+e_{012}+1}{8 (\alpha-1) (\alpha
   (\rho_{01}-1)+1)}\\
b_- & = \frac{2 \alpha (\alpha (4
   \rho_{01}-2 (\rho_{02}+1))-4 \rho_{01}+2 \rho_{02}+1)+e_{012}+1}{8
   (\alpha-1) \alpha (\rho_{01}-1)}\\
a_+ & = -\frac{2
   \alpha (2 \alpha (-2 \rho_{01}+\rho_{02}+1)+4 \rho_{01}-2
   \rho_{02}-3)+e_{012}+1}{8 (\alpha-1) \alpha
   (\rho_{01}-1)}\\
b_+ & = \frac{2 \alpha (-2 \alpha
   (\rho_{02}-1)+2 \rho_{02}-3)+e_{012}+1}{8 \alpha (\alpha
   (\rho_{01}-1)-\rho_{01})}\;.
\end{split}
\]
Note that due to the restriction that $a_-, a_+, b_-$, and $b_+$ are
probabilities, $(\alpha, \rho_{01},$  $\rho_{02},e_{012})$ can only take
values in a strict subset of $[0,1]\times [-1,1]^3$.

An illustration of the different behavior that can be achieved for
two-dependent environments (as opposed to Markovian environments) is
given in Figure~\ref{fig:twodepvsMarkov}. Here, $\alpha = 0.95$ and
$\rho_1 = 0.3$. The drift for the corresponding Markovian case is
indicated in the figure. 
The cutoff value is here approximately 0.75. By varying $\rho_2$ and
$e_{012}$ one can achieve a considerable increase in the drift. It is not
difficult to verify that the smallest
possible value for $\rho_2$ is here  $(\alpha - 1)/\alpha =  -1/19$,
in which case  $e_{012}$ can only take the value 
$ 3 + 2 \alpha (-5 - 4 \alpha(-1 + \rho_1) + 4 \rho_1) = 417/500.$
This gives a maximal cutoff value of 1. The corresponding drift curve
is indicated by the ``maximal'' label in Figure~\ref{fig:twodepvsMarkov}. 
For $\rho_2 = 0$, the parameter $e_{012}$ can at most vary from 
$-1 + 2 \alpha(-1 + \alpha(2 - 4 \rho_1) + 4 \rho_1) = 103/123 =
0.824$ to $7 + 2 \alpha (-9 + \alpha (6 - 4 \rho_1) + 4 \rho_1) =
211/250= 0.844$. The solid red curves show the evolution of the drift
between these extremes. The dashed blue curve corresponds to the drift for the
independent case with $\alpha = 0.95$.

\begin{figure}[H]
\centering
\includegraphics[width=0.8\linewidth]{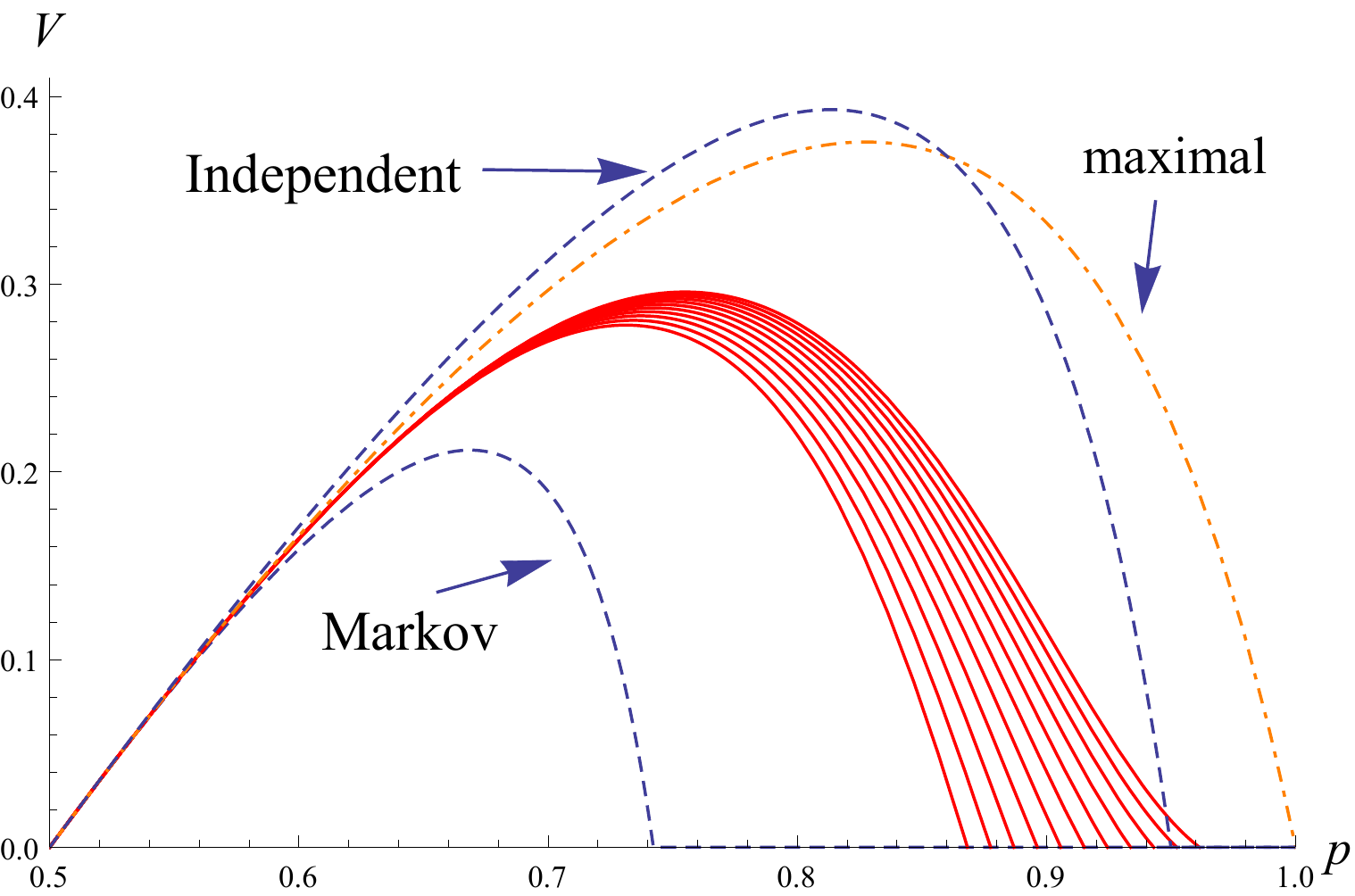}
\caption{Drift for $\alpha = 0.95$ and $\rho_1 = 0.3$ for various
  $\rho_2$ and $e_{012}$. The solid red curves show the drift for $\rho_2=0$
  and $e_{012}$ varying from 0.824 to 0.844. The smallest dashed blue curve 
  corresponds to the Markov case. The ``maximal'' dotdashed orange curve corresponds to the case $\rho_2
  =  - 1/19$ and $e_{012} = 417/500$. The middle dashed blue line gives the
  independent case.} 
\label{fig:twodepvsMarkov}
\end{figure}

\subsection{Moving average environment}
Recall that the environment is given by $U_i = g(Y_i)$ where the
Markov process $\{Y_i\}$ is given by $Y_i=(\hat U_{i}, \hat
U_{i+1},\hat U_{i+2})$. The sequence $\{\hat U_i\}$ is iid with
$\Pm(\hat U_i=1)=\alpha=1-\Pm(\hat U_i=-1)$. Thus, $\{Y_i\}$ has states
$1= (-1,-1,-1), 2 = (-1,-1,1), \ldots, 8 = (1,1,1)$ (in lexicographical
order) and transition matrix $P$ given by (\ref{P_movav}).  The
deterministic function $g$ is given by (\ref{g_movav}); see also
Figure~\ref{fig:movav}.

The almost sure behavior of $\{X_n\}$ again depends only on $\Em[U_0]$ which equals $-4\alpha^3+6\alpha^2-1=(2\alpha-1)(-2\alpha^2+2\alpha+1)$. Since $-2\alpha^2+2\alpha+1>0$ for $0\leq \alpha \leq 1$, the sign of $\Em[U_0]$ is the same as the sign of $\Em[\hat U_0]=2\alpha-1$, so the almost sure behavior is precisely the same as in the iid case; we will not repeat it here (but see Theorem~\ref{thm:movav}). 

To study the drift, we need the stationary vector of $\{Y_i\}$, which is given by
\begin{equation}  \label{pi_movav}
\begin{split}
\vect{\pi} & = 
\big\{(1-\alpha)^3,(1-\alpha)^2 \alpha,(1-\alpha)^2
   \alpha,(1-\alpha) \alpha^2, \\
   & \hspace{2cm} (1- \alpha)^2 \alpha,(1-\alpha) \alpha^2,(1-\alpha)
   \alpha^2,\alpha^3 \big\},
\end{split}
\end{equation} 
and the convergence behavior of $\sum (PD)^n$, with
$D=\diag(\sigma^{-1}, \sigma^{-1}, \sigma^{-1}, \sigma, \sigma^{-1},$ $ \sigma, \sigma, \sigma)$. This is given in the following lemma.

\begin{lemma}  \label{lem_movav}
The matrix series $\sum_{n=0}^\infty (PD)^n$ converges to
$(I-PD)^{-1}$ iff $\sigma$ lies between 1 and~$\sigma_{\mathrm{cutoff}}$, which is the unique root $\neq1$ of 
\begin{equation}
\begin{split}
\det(I-PD)=&-\frac{\alpha(1-\alpha)^2}{\sigma^3}+\frac{\alpha^2(1-\alpha)^2}{\sigma^2}-\frac{(1-\alpha)(1-\alpha+\alpha^2)}{\sigma}+1
\\ & -2\alpha^2(1-\alpha)^2
-\alpha^2(1-\alpha)\sigma^3+\alpha^2(1-\alpha)^2\sigma^2-\alpha(1-\alpha+\alpha^2)\sigma.  \label{Detmovav}
\end{split} 
\end{equation}
\end{lemma}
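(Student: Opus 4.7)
The plan is to mimic the proof of Lemma~\ref{lem2dep}, since the structural ingredients (nonnegativity, irreducibility, a stochastic matrix at $\sigma=1$) all carry over to the moving average setting. The series $\sum_{n=0}^\infty (PD)^n$ converges to $(I-PD)^{-1}$ if and only if $\mathrm{Sp}(PD)<1$, so everything reduces to tracking the Perron--Frobenius eigenvalue of $PD$ as a function of $\sigma$.

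First I would verify that $PD$ is nonnegative and irreducible for every $\sigma>0$. Nonnegativity is immediate, and irreducibility follows from the structure of $P$ in \eqref{P_movav}: from any state $(\hat u_i,\hat u_{i+1},\hat u_{i+2})$ one can, in three steps, reach any target $(\hat u',\hat u'',\hat u''')$ by choosing the fresh coordinates appropriately, and $D$ merely scales rows positively. By Perron--Frobenius, for every $\sigma>0$ there is a unique eigenvalue of maximum modulus, which is simple, real, and positive; call it $\lambda_0(\sigma)$, and let $\lambda_1(\sigma),\ldots,\lambda_7(\sigma)$ denote continuous selections of the remaining eigenvalues. At $\sigma=1$, $PD=P$ is stochastic, so $\lambda_0(1)=1$ and $|\lambda_i(1)|<1$ for $i\ge 1$. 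By the continuity argument used in Lemma~\ref{lem2dep}, $\lambda_0(\sigma)$ retains its role as the spectral radius for every $\sigma>0$, since any ``overtaking'' would require $|\lambda_i(\sigma^*)|=\lambda_0(\sigma^*)$ at some $\sigma^*$, contradicting simplicity of the Perron--Frobenius eigenvalue.

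Next I would analyse when $\lambda_0(\sigma)=1$, equivalently when $\det(I-PD)=0$. Using $\det(D)=1$, one has $\det(I-PD)=\det(D^{-1}-P)$, and a direct expansion (multiplying through by $\sigma^3$) gives \eqref{Detmovav}. This is a polynomial in $\sigma$ and $\sigma^{-1}$ of total degree six in $\sigma$ once cleared; $\sigma=1$ is evidently a root because $P$ is stochastic. The main task is to identify a single additional real root $\sigma_{\mathrm{cutoff}}>0$ on the ``relevant'' side of $1$, and to rule out any further crossing of $\lambda_0$ through the value~$1$ between $1$ and $\sigma_{\mathrm{cutoff}}$. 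The latter follows at once from the implicit function theorem applied at $(\sigma,\lambda)=(1,1)$: since $\partial_\lambda f(1,1)\ne 0$ (the Perron eigenvalue at the stochastic matrix is simple), $\lambda_0$ is smooth near $\sigma=1$ with
\[
\left.\frac{\di\lambda_0}{\di\sigma}\right|_{\sigma=1}
= -\frac{\partial_\sigma f(1,1)}{\partial_\lambda f(1,1)},
\]
whose sign (computed from \eqref{Detmovav}) is opposite to that of $\sigma_{\mathrm{cutoff}}-1$. Hence $\lambda_0$ moves away from $1$ in the direction of the cutoff, crosses $1$ again at $\sigma_{\mathrm{cutoff}}$, and lies strictly below $1$ on the open interval between $1$ and $\sigma_{\mathrm{cutoff}}$, while lying above $1$ outside the closed interval (on the ``cutoff side'').

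The main obstacle is verifying uniqueness of a real root of \eqref{Detmovav} on the relevant side of $\sigma=1$ among all six roots of the degree-six polynomial $\sigma^3\det(I-PD)$. The cleanest way I would argue this is indirect: if there were a second real root $\sigma'$ strictly between $1$ and $\sigma_{\mathrm{cutoff}}$ then, by continuity, $\lambda_0$ would have to return to the value $1$ there, contradicting the monotone departure at $\sigma=1$ established by the implicit function theorem; and if there were a real root beyond $\sigma_{\mathrm{cutoff}}$ one obtains, by the same mechanism, another interval on which $\lambda_0<1$, which would entail that some intermediate $\tilde\sigma$ has $\lambda_0(\tilde\sigma)=1$ with $\lambda_0'(\tilde\sigma)\leq 0$ --- impossible, since the implicit function theorem applied at that simple root forces a strictly nonzero derivative of consistent sign. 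Consequently $\mathrm{Sp}(PD)<1$ precisely on the open interval with endpoints $1$ and $\sigma_{\mathrm{cutoff}}$, and on that interval $(I-PD)^{-1}$ exists and equals the sum of the Neumann series, completing the proof.
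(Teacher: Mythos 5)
Your overall architecture is the same as the paper's (which explicitly models this proof on that of Lemma~\ref{lem2dep}): Neumann series iff $\mathrm{Sp}(PD)<1$, Perron--Frobenius plus continuity to identify $\mathrm{Sp}(PD)$ with a single branch $\lambda_0(\sigma)$ satisfying $\lambda_0(1)=1$, the identity $\det(I-PD)=\det(D^{-1}-P)$ via $\det(D)=1$, and the implicit function theorem at $(\sigma,\lambda)=(1,1)$ to read off the sign of $\lambda_0'(1)$. Up to that point you are fine, and your irreducibility argument (three steps refresh all coordinates) is correct.

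The genuine gap is in your uniqueness argument for the second root of $f(\sigma,1)=\det(I-PD)=0$. You try to deduce it ``indirectly'' from the implicit function theorem, but the IFT is a purely local tool: knowing $\lambda_0'(1)\neq 0$ says nothing about whether $\lambda_0$ returns to the value $1$ further away, and your claim that a later simple crossing would have ``a strictly nonzero derivative of consistent sign'' is unfounded --- transversal crossings of a level set generically alternate in derivative sign, so a function with $\lambda_0(1)=1$ and $\lambda_0'(1)<0$ can perfectly well cross $1$ three or more times. Your argument is also circular in that it presupposes a well-defined $\sigma_{\mathrm{cutoff}}$, which is exactly what uniqueness is supposed to deliver. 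In contrast to Lemma~\ref{lem2dep}, where $\det(I-PD)$ factors explicitly and the two roots are visible, here one needs a \emph{global} analytic fact about the map $\sigma\mapsto f(\sigma,1)$. The paper supplies it: one checks that $\bigl(\partial^2/\partial\sigma^2\bigr)f(\sigma,1)<0$ for all $\sigma>0$ (treating $0<\sigma<1$ and $\sigma\geq 1$ separately), i.e., $f(\cdot,1)$ is strictly concave on $(0,\infty)$, and examines its boundary behavior; a strictly concave function has at most two zeros, and since $\sigma=1$ is a zero with (for $\alpha\neq 1/2$) nonzero derivative $(2\alpha-1)(2\alpha^2-2\alpha-1)$, there is exactly one further positive root $\sigma_{\mathrm{cutoff}}$. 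This concavity computation is the one ingredient your proposal is missing; once it is in place, the rest of your argument (sign of $\lambda_0'(1)$ locating $\sigma_{\mathrm{cutoff}}$ relative to $1$, and $\lambda_0<1$ strictly between the two roots) goes through as you describe.
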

\begin{proof}
The proof is similar to that of Lemma~\ref{lem2dep}; we only give an outline, leaving details for the reader to verify. Again, denote the possibly complex eigenvalues  of $PD$  by  $\lambda_i(\sigma), i=0, \ldots, 7$ and use Perron-Frobenius theory to conclude that for any $\sigma>0$ we have Sp$(PD)=\lambda_0(\sigma)$, say, with $\lambda_0(1)=1$.

To find out when $\lambda_0(\sigma)<1$ we again use the implicit function theorem on $f(\sigma, \lambda_0)=0$, with $f(\sigma, \lambda)=\det(\lambda I-PD)$. 
Setting $\lambda=1$ gives (\ref{Detmovav}). It can be shown that $f(\sigma,1)$ is zero at $\sigma=1$, that $f(\sigma,1)\ra \infty$ for $\sigma\downarrow 0$,  and that $(\partial^2/\partial \sigma^2) f(\sigma,1)<0$ for all $\sigma>0$ (for the latter, consider $0<\sigma<1$ and $\sigma\geq1$ separately). Thus we can conclude that 
$f(\sigma,1)$ has precisely two roots for $\sigma>0$, at $\sigma=1$ and at $\sigma=\sigma_{\mathrm{cutoff}}$.

As a result we have either $\lambda_0(\sigma)>1$ or $\lambda_0(\sigma)<1$ when $\sigma$ lies between 1 and~$\sigma_{\mathrm{cutoff}}$. For the location of $\sigma_{\mathrm{cutoff}}$ it is helpful to know that 
 $(\partial/\partial \sigma) f(\sigma,1)\big|_{\sigma=1}=(2\alpha-1)(2\alpha^2-2\alpha-1)$, which is positive for $0<\alpha<1/2$ and negative for $1/2<\alpha<1$. Thus we have $\sigma_{\mathrm{cutoff}}>1$ iff $\alpha<1/2$.
Also $(\partial/\partial \lambda) f(1,1)=1$ so that the implicit function theorem gives $(d/d\sigma) \lambda_0(\sigma)\big|_{\sigma=1}=-(2\alpha-1)(2\alpha^2-2\alpha-1)$, so that indeed $\lambda_0(\sigma)<1$
iff $\sigma$ lies between 1 and~$\sigma_{\mathrm{cutoff}}$. 
\end{proof}


\medskip
\noindent
The cutoff value for $p$ is now easily found as $(1+\sigma_{\mathrm{cutoff}})^{-1}$, which can be numerically evaluated. The values are plotted in Figure~\ref{fig:movavcutoff}.
\begin{figure}[H]
\centering
\includegraphics[width=0.75\linewidth]{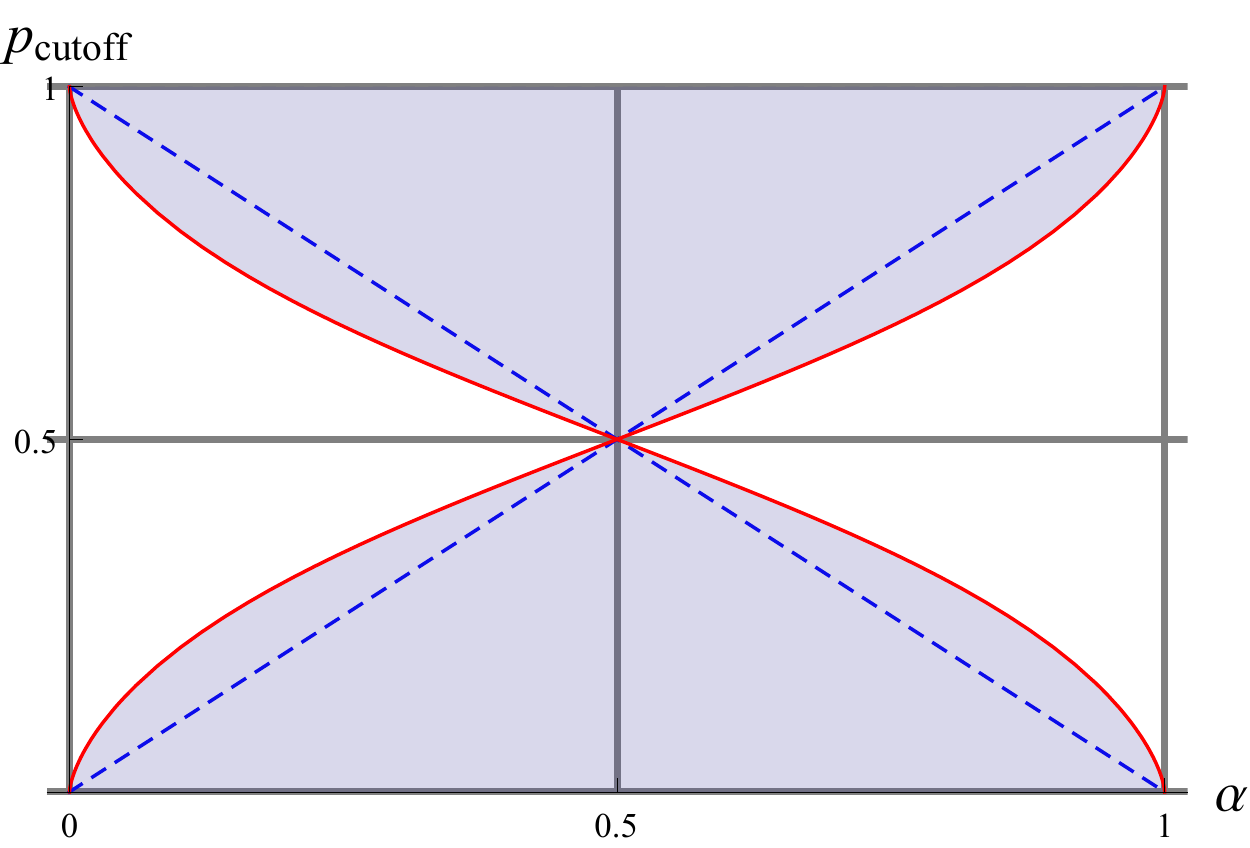}
\caption{Relation between cutoff value for $p$, and $\alpha$. The solid red curve is for the moving average process. For comparison, the dashed blue line is the iid case (see also
Figure~\ref{fig:iidcase}).} 
\label{fig:movavcutoff}
\end{figure}

When $p$ lies between 1/2 and $p_{\mathrm{cutoff}}$, the drift is
given by $V=(2 \vect{\pi} (I-PD)^{-1} \ \vect{1}\ -1)^{-1}$, where
$\vect{\pi}$ is given in (\ref{pi_movav}) and  $(I-PD)^{-1}$ follows
from Lemma~\ref{lem_movav}. Using computer algebra we can find a rather unattractive, but explicit expression for the value of the drift; it is given by
%
%
 the quotient of
\[
\begin{split}
& \alpha ^4 \left(-(1-2 p)^2\right) (p-1) p+\alpha ^3 (1-2 p ((p-2) p (p
(2 p-5)+6)+4)) \\
& +\alpha ^2 (2 p-1) (p (3 p ((p-2)
   p+3)-5)+1)-\alpha  (1-2 p)^2 p^2-(p-1)^2 p^3 (2 p-1)
\end{split}
\] 
and 
\[
\begin{split}
& -2 \alpha ^5 (2 p-1)^3-\alpha ^4 (1-2 p)^2 ((p-11) p+6)+\alpha ^3 (2
  p-1) (2 p (p^3-9 p+10) \\
 & -5)-\alpha ^2 (p+1)
   (2 p-1) (p (p (3 p-7)+6)-1)+\alpha  p^2 (2 p-1)+(p-1)^2 p^3\;.
\end{split}
\]

\begin{thm}   \label{thm:movav}
Let $p_{\mathrm{cutoff}}=(1+\sigma_{\mathrm{cutoff}})^{-1}$, where
$\sigma_{\mathrm{cutoff}}$ follows from Lemma \ref{lem_movav}. Then $p_{\mathrm{cutoff}}>1/2$ iff $\alpha>1/2$.
We distinguish between transient cases with and without drift, and the recurrent case as follows.
\begin{enumerate}
\item[1a.] If either $\alpha > 1/2$ and $p\in (1/2, p_{\mathrm{cutoff}})$ or $\alpha < 1/2$ and $p
  \in (p_{\mathrm{cutoff}}, 1/2)$, then almost surely $\ds \lim_{n\ra \infty} {X_n} = \infty\;$ and the drift $V>0$ is given as above.
\item[1b.] If either 
 $\alpha > 1/2$ and $p \in (1-p_{\mathrm{cutoff}},1/2)$ or 
  $\alpha < 1/2$ and $p \in (1/2, 1-p_{\mathrm{cutoff}})$, then  almost surely $\ds \lim_{n\ra \infty} {X_n} = -\infty\;$ and the drift $V<0$ is given as minus the same expression as above but with $p$ replaced by $1-p$.
\item[2a.] If either $\alpha > 1/2$ and $p\in [p_{\mathrm{cutoff}},1]$ or $\alpha < 1/2$ and $p
  \in [0, p_{\mathrm{cutoff}}]$, then almost surely $\ds \lim_{n\ra \infty} {X_n} = \infty\;,$ but $V=0$.
\item[2b.] If either $\alpha > 1/2$ and $p\in [0, 1-p_{\mathrm{cutoff}}]$ or $\alpha < 1/2$ and $p
  \in [1-p_{\mathrm{cutoff}}, 1]$, then almost surely $\ds \lim_{n\ra \infty} {X_n} = -\infty\;,$ but $V=0$.
\item[3.] Otherwise (when $\alpha = 1/2$ or $p=1/2$ or both),  $\{X_n\}$ is recurrent and $V = 0$. 
\end{enumerate}
\end{thm}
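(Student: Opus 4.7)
The plan is to combine Theorem~\ref{thm:alili1}, Theorem~\ref{thm:alili2}, Lemma~\ref{lem_movav}, and the identity $V = 1/(2\Em[S]-1)$ from the general framework of Section~\ref{sec:generalsolution}, exactly mirroring how Theorems~\ref{thm:iid} and~\ref{thm:general} were derived. First I would handle the qualitative part (almost sure limits of $X_n$). Using (\ref{logrho-swap}) we have $\Em[\log\sigma_0] = \Em[U_0]\log\sigma$, and I already observed in the paragraph preceding the theorem statement that $\Em[U_0] = (2\alpha-1)(-2\alpha^2+2\alpha+1)$ has the same sign as $2\alpha-1$ since the second factor is positive on $[0,1]$. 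Applying Theorem~\ref{thm:alili1} term by term then yields the transience/recurrence classification: $X_n \to +\infty$ iff $(\alpha-1/2)(p-1/2) > 0$, $X_n \to -\infty$ iff $(\alpha-1/2)(p-1/2) < 0$, and recurrence iff $\alpha = 1/2$ or $p = 1/2$.

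Next I would verify the cutoff claim $p_{\mathrm{cutoff}} > 1/2 \iff \alpha > 1/2$. This is immediate from the observation in the proof of Lemma~\ref{lem_movav} that $\sigma_{\mathrm{cutoff}} > 1 \iff \alpha < 1/2$, combined with the monotone decreasing relation $p = 1/(1+\sigma)$; hence $p_{\mathrm{cutoff}} < 1/2 \iff \sigma_{\mathrm{cutoff}} > 1 \iff \alpha < 1/2$.

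For the positive drift case (1a), I would invoke Theorem~\ref{thm:alili2}(1), which gives $V = 1/(2\Em[S]-1)$ whenever $\Em[S] < \infty$. By Lemma~\ref{lem_movav}, $\Em[S] < \infty$ is equivalent to $\sigma$ lying between $1$ and $\sigma_{\mathrm{cutoff}}$, i.e. $p$ lying between $1/2$ and $p_{\mathrm{cutoff}}$. On this range, (\ref{series}) gives $\Em[S] = \vect{\pi}(I-PD)^{-1}\vect{1}$ with $\vect{\pi}$ from (\ref{pi_movav}), $P$ from (\ref{P_movav}), and $D = \diag(\sigma^{-1},\sigma^{-1},\sigma^{-1},\sigma,\sigma^{-1},\sigma,\sigma,\sigma)$. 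Substituting $\sigma = (1-p)/p$ and simplifying $1/(2\Em[S]-1)$ with computer algebra produces the stated ratio of two polynomials in $\alpha$ and $p$. Case (1b) then follows by the symmetry argument already given in Section~\ref{ssec:swapmodel}: replacing $\sigma$ with $\sigma^{-1}$ and $p$ with $1-p$ turns $\Em[S]$ into $\Em[F]$, and applying Theorem~\ref{thm:alili2}(2) introduces a sign flip, producing the formula in case (1b) as the negative of the case (1a) formula with $p \mapsto 1-p$.

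For the zero-drift transient cases (2a) and (2b) I would apply Theorem~\ref{thm:alili2}(3), which requires verifying that \emph{both} $\Em[S]$ and $\Em[F]$ are infinite. Case (2a) with $\alpha > 1/2$ and $p \in [p_{\mathrm{cutoff}},1]$ has $\Em[S] = \infty$ directly by Lemma~\ref{lem_movav}, and $\Em[F] = \infty$ follows because the analogue of Lemma~\ref{lem_movav} for $\Em[F]$ (obtained by the $\sigma \leftrightarrow \sigma^{-1}$ symmetry) requires $p \in (1-p_{\mathrm{cutoff}},1/2)$, which is disjoint from $[p_{\mathrm{cutoff}},1]$ when $p_{\mathrm{cutoff}} > 1/2$. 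The other subcases of (2a) and (2b) follow symmetrically. For the recurrent case (3), either $\alpha = 1/2$ or $p = 1/2$ forces $\Em[\log\sigma_0] = 0$; then Theorem~\ref{thm:alili1}(3) gives recurrence, and since the intervals of $p$ for which $\Em[S]$ or $\Em[F]$ are finite both collapse to a single point (or are empty) at the boundary $\alpha = 1/2$ or $p = 1/2$, Theorem~\ref{thm:alili2}(3) gives $V = 0$.

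The main obstacle is the explicit algebraic simplification of $1/(2\vect{\pi}(I-PD)^{-1}\vect{1}-1)$ into the stated ratio of polynomials; this is mechanical but unwieldy due to the $8\times 8$ matrix inversion, and is best delegated to a symbolic algebra system. The only conceptual subtlety is the careful bookkeeping in cases (2a)/(2b) to argue that \emph{both} $\Em[S]$ and $\Em[F]$ diverge so that Theorem~\ref{thm:alili2}(3) truly applies rather than a nonzero-drift clause of that theorem.
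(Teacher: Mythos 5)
Your proposal is correct and follows essentially the same route as the paper, which presents Theorem~\ref{thm:movav} as an immediate consequence of the preceding discussion: the sign analysis of $\Em[U_0]=(2\alpha-1)(-2\alpha^2+2\alpha+1)$ for the almost sure behavior, Lemma~\ref{lem_movav} plus \eqref{series} and $V=(2\Em[S]-1)^{-1}$ for the drift (with the computer-algebra simplification), and the $\sigma\leftrightarrow\sigma^{-1}$, $p\leftrightarrow 1-p$ symmetry for the negative-drift and zero-drift cases. Your explicit bookkeeping that both $\Em[S]$ and $\Em[F]$ diverge in cases (2a)/(2b) is a detail the paper leaves implicit, but it matches the intended argument.
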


Figure~\ref{fig:movav_vs_indep} compares the drifts for the moving
average and independent environments. 

\begin{figure}[H]
\centering
\includegraphics[width=0.85\linewidth]{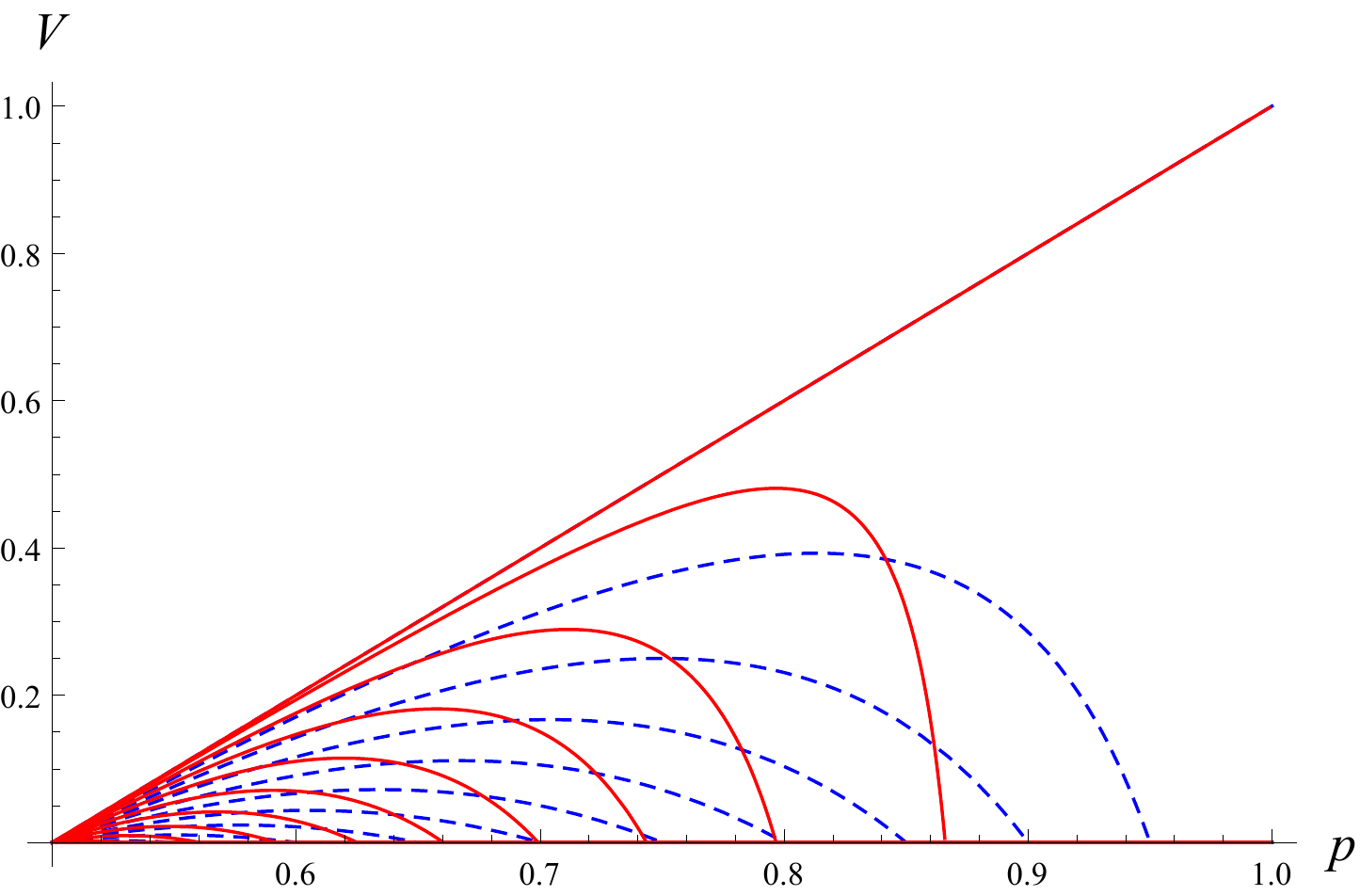}
\caption{Red: Drift for the moving average environment as a function of $p$ 
 for $\alpha = 1, 0.95, \ldots,0.55$ (from
  highest to lowest curves). Blue: comparison with the independent
  case. }
\label{fig:movav_vs_indep}
\end{figure}

It is interesting to note that the cutoff points
(where $V$ becomes 0) are significantly lower in the moving average
case than the iid case, using the same $\alpha$, while 
at the same time
the maximal drift that can be achieved is  {\em higher} for the moving
average case than for the iid case. This is different behavior from
the Markovian case; see also Figure~\ref{fig:rwre3Vversusp}.

\section{Conclusions}\label{sec:concl}
Random walks in random environments can exhibit interesting and unusual
behavior due to the trapping phenomenon. The dependency structure of
the random environment can significantly affect the drift of the
process. We showed how to conveniently construct dependent environment
processes, including $k$-dependent and moving average environments,
by using an auxiliary Markov chain. For the well-known swap RWRE
model,  
this approach allows for easy
computation of drift, as well as explicit 
conditions under which the drift is
positive, negative, or zero. The cutoff values where the drift
becomes zero, are determined via Perron--Frobenius theory.
Various  generalizations of the above environments can be considered in the same (swap model) framework, and can be analyzed along the same lines, e.g.,  replacing iid by Markovian $\{\hat{U}_i\}$ in the
moving average model, or taking moving averages of more than 3
neighboring states. 

\enlargethispage{\baselineskip}

Other possible directions for future research are (a) extending the
two-state dependent random environment to a $k$-state dependent random
environment; (b) replacing the transition probabilities for swap model
with the more general rules in Eq.\eqref{transitions}; and  (c) generalizing
the single-state random walk process to a multi-state discrete-time
{\em quasi birth and death process}
(see, e.g., \cite{bean1997}). By using an infinite ``phase space'' for
such processes, it might be possible to bridge the gap between the
theory for one- and multi-dimensional RWREs.

\section*{Acknowledgements}
This work was supported by the Australian Research
Council {\em Centre of Excellence for Mathematical and Statistical
  Frontiers} (ACEMS) 
under grant number CE140100049. Part of this work was done while the
first author was an Ethel Raybould Visiting Fellow at The University
of Queensland. We thank Prof.\ Frank den Hollander for his useful
comments. 
\bibliographystyle{plain}
\bibliography{refs}

\end{document}